\def\bZ{{\bf Z}}
\def\cN{{\mathcal N}}
\def\nn{\nonumber}
\def\a{\alpha}  \def\d{\delta} \def\D{\Delta}
\def\e{\varepsilon}    
\def\G{\Gamma}  
 \def\th{\theta}    
  \def\n{\nu} \def\p{\pi}
\def\r{\rho}  \def\s{\sigma} 
 \def\om{\omega}  \def\U{\Upsilon}
\renewcommand{\c}[1]{{\mathcal #1}}
\def\cP{{\cal P}}
\def\bx{{\bf x}}\def\by{{\bf y}}
\newtheorem{theorem}{Theorem}
\newtheorem{lemma}[theorem]{Lemma}
\newtheorem{corollary}[theorem]{Corollary}
\newtheorem{claim}[theorem]{Claim}
\newtheorem{proposition}{Proposition}
\newcommand{\proofend}{\hspace*{\fill}\mbox{$\Box$}}
\newcommand{\Okl}[1]{\Omega_1^{(#1)}}
\newcommand{\wh}[1]{\widehat{#1}}
\newcommand{\rdup}[1]{{\left\lceil #1\right\rceil }}
\newcommand{\rdown}[1]{{\left\lfloor #1\right \rfloor}}
\newcommand{\brac}[1]{\left(#1\right)}
\newcommand{\bfrac}[2]{\left(\frac{#1}{#2}\right)}
\def\cE{{\cal E}}
\newcommand{\set}[1]{\left\{#1\right\}}
\def\E{\mathbb{E}}
\def\Var{\mathbb{V}ar}
\def\Pr{\mathbb{P}}
\newcommand{\ignore}[1]{}
\def\cC{{\mathcal C}}
\def\cD{{\mathcal D}}
\def\cE{{\mathcal E}}
\def\cM{{\mathcal M}}
\def\cN{{\mathcal N}}
\def\cP{{\mathcal P}}
\newcommand{\card}[1]{\left|#1\right|}
\newcommand{\beq}[2]{\begin{equation}\label{#1}#2\end{equation}}
\newcommand{\mults}[1]{\begin{multline*}#1\end{multline*}}
\newcommand{\mult}[2]{\begin{multline}\label{#1}#2\end{multline}}
\def\nn{\nonumber}
\def\cN{{\cal N}}
\def\bd{{\bf d}}
\def\bz{{\bf z}}
\def\leb{\leq_b}
\newcommand{\dnm}[1]{D_{n,m}^{(\d\geq#1)}}
\newcommand{\cdnm}[1]{{\mathcal D}_{n,m}^{(\delta\geq#1)}}
\newcommand{\gnm}[1]{G_{n,m}^{(\d\geq#1)}}
\begin{document}
\author{Colin Cooper\thanks{Research supported  at the University of Hamburg, by a DFG Mercator fellowship from Project 491453517}\\Department of Informatics\\
King's College\\
London WC2B 4BG\\England
\and
Alan Frieze\thanks{Research supported in part by NSF grant DMS1952285}\\Department of Mathematical Sciences\\Carnegie Mellon University\\Pittsburgh PA 15213\\U.S.A.}

\title{Hamilton cycles in random digraphs with minimum degree at least one}
\maketitle
\begin{abstract}
We study the existence of a directed Hamilton cycle in random digraphs with $m$ edges where we condition on minimum in- and out-degree at least one. Denote such a random graph by $D_{n,m}^{(\delta\geq1)}$. We prove that if $m=\tfrac n2(\log n+2\log\log n+c_n)$ then
\[
\lim_{n\to\infty}\Pr(D_{n,m}^{(\delta\geq1)}\text{ is Hamiltonian})=\begin{cases}0&c_n\to-\infty.\\e^{-e^{-c}/4}&c_n\to c.\\1&c_n\to\infty.\end{cases}
\]
\end{abstract}
\section{Introduction}
Let $D_{n,m}$ denote the random digraph with vertex set $[n]$ and $m$ random edges. McDiarmid \cite{McD} proved that if $m=n(\log n+\log\log n+\om)$ where $\om\to\infty$ then $D_{n,m}$ is Hamiltonian w.h.p. Subsequently, Frieze \cite{F1} sharpened this and proved that if $m=n(\log n+c_n)$
\beq{H1}{
\lim_{n\to\infty}\Pr(D_{n,m}\text{ is Hamiltonian})=\begin{cases}0&c_n\to-\infty.\\e^{-2e^{-c}}&c_n\to c.\\1&c_n\to\infty.\end{cases}
}
The R.H.S. of \eqref{H1} is the limiting probability that $\min\set{\d^-,\d^+}\geq 1$, where $\d^-$ (resp. $\d^+$) denotes the minimum in-degree (resp. out-degree) of $D_{n,m}$. In this paper we study the effect of conditioning on  $\min\set{\d^-,\d^+}$ being at least one. So, let $\cdnm{1}$ denote the set of digraphs with vertex set $[n]$ and $m$ edges such that $\min\set{\d^-,\d^+}\geq 1$. Then let $\dnm{1}$ be sampled uniformly from $\cdnm{1}$.
\begin{theorem}\label{th1}
Let $m=\tfrac n2(\log n+2\log\log n+c_n)$ then
\beq{H2}{
\lim_{n\to\infty}\Pr(\dnm{1}\text{ is Hamiltonian})=\begin{cases}0&c_n\to-\infty.\\e^{-e^{-c}/8}&c_n\to c.\\1&c_n\to\infty.\end{cases}
}
\end{theorem}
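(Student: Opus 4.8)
Since every Hamilton cycle of a digraph uses exactly one in-edge and one out-edge at each vertex, an edge $u\to v$ with $d^-(v)=1$ (the only in-edge of $v$), or with $d^+(u)=1$ (the only out-edge of $u$), lies on \emph{every} Hamilton cycle; call such edges \emph{forced}. Hence $\dnm1$ is non-Hamiltonian whenever it contains a \emph{blocker}: a vertex $u$ with two distinct out-neighbours $v_1,v_2$ both of in-degree $1$ (then $u\to v_1$ and $u\to v_2$ are both forced, but a Hamilton cycle uses only one out-edge at $u$), or, symmetrically, a vertex $w$ with two distinct in-neighbours both of out-degree $1$. Put $d=m/n=\tfrac12\log n+\log\log n+\tfrac12 c_n$. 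Using that in $\dnm1$ the in- and out-degrees of a fixed vertex behave, to leading order, like independent $\mathrm{Poisson}(d)$ variables conditioned to be positive, a first-moment calculation gives
\[
\E X=(1+o(1))\,n\,d^2e^{-2d}=(1+o(1))\tfrac14 e^{-c_n},
\]
where $X$ counts blockers, each of the two symmetric types contributing $(1+o(1))\tfrac18 e^{-c_n}$. So $\E X\to\infty$, $\tfrac14 e^{-c}$, $0$ in the three regimes. (The only other sparse obstruction is a non-spanning directed cycle all of whose edges are forced; its expected number is $o(1)$, so w.h.p.\ none occurs, and it does not affect the limit.)

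If $c_n\to-\infty$ a second-moment argument finishes: blockers are built from essentially disjoint local data, so $\E[X^2]=(1+o(1))(\E X)^2$, hence $X\ge1$ w.h.p.\ and $\Pr(\dnm1\text{ is Hamiltonian})\to0$. In the other two regimes I would combine a Poisson approximation for $X$ (Chen--Stein, or factorial moments), which gives $\Pr(X=0)\to e^{-e^{-c}/4}$ when $c_n\to c$ and $\Pr(X=0)\to1$ when $c_n\to\infty$, with the conditional statement
\[
\Pr(\dnm1\text{ is Hamiltonian}\mid X=0)\longrightarrow 1 .
\]
Since being Hamiltonian forces $X=0$, these give $\Pr(\dnm1\text{ Hamiltonian})=\Pr(X=0)-o(1)$, which tends to $e^{-e^{-c}/4}$, resp.\ $1$.

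It remains to prove the conditional statement, which is the crux. I would first record the usual package of w.h.p.\ properties of $\dnm1$: control of the degree sequence; a vertex-expansion bound for all small vertex sets; absence of small dense subdigraphs; and the fact that once $X=0$ every vertex has at most one forced out-edge and at most one forced in-edge, so the set $F$ of forced edges is a union of vertex-disjoint directed paths, of total length $O(\sqrt n)$ in the relevant range of $c_n$. Since $\Pr(X=0)$ is then bounded away from $0$, conditioning on $X=0$ preserves all these statements. Next pass to the bipartite graph $\Gamma=\Gamma(D)$ with parts $A,B$ indexed by $[n]$ and edge $a_ib_j$ iff $i\to j$ in $D$: perfect matchings of $\Gamma$ correspond to spanning unions of vertex-disjoint directed cycles of $D$, and Hamilton cycles of $D$ to perfect matchings whose permutation is a single $n$-cycle. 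Minimum degree at least $1$, expansion, and the exclusion of blockers (the Hall violators of size $\le2$ correspond to blockers, and larger ones are killed by expansion) give a perfect matching of $\Gamma$, indeed one containing $F$. Starting from such a matching one repeatedly \emph{merges} two cycles of the current $1$-factor into one by switching along a short alternating cycle of $\Gamma$, driving the number of cycles down to $1$; to bound the number of merges one can either show all cycles are long or instead start from a uniformly random perfect matching of $\Gamma$, whose $1$-factor has at most $n^{o(1)}$ cycles w.h.p.

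The merging step is the main obstacle: near a forced edge one cannot reroute, so one must show that every cycle that arises meets enough non-forced edges to admit a switch, and that the short alternating cycles needed to absorb a small cycle into a large one always exist, using the expansion of $D$ restricted to the non-rigid vertices (the $\Theta(\sqrt n)$ vertices of in- or out-degree $1$). A secondary difficulty is establishing the structural package directly in the $\delta\ge1$-conditioned model, where the independence heuristics behind the first-moment calculation must be justified — e.g.\ via switchings, or by comparison with $D_{n,m}$ on a high-probability event.
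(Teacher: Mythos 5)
Your identification of the obstruction (two vertices of in-degree one sharing an in-neighbour, or the out-degree analogue), your first-moment value $\tfrac14e^{-c_n}$, and your overall plan — Poisson approximation for the obstruction count, then a bipartite perfect matching converted into a Hamilton cycle by cycle merging — all coincide with what the paper does. The paper's Lemma \ref{pm} is exactly your Hall-violator analysis (size-$\le 2$ violators are the "cherries", larger ones are killed by expansion), and its Phases 2 and 3 are your merging step, seeded from a uniformly random perfect matching to control the number of cycles.

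However, there is a genuine gap, and you have located it yourself: the conditional statement $\Pr(\text{Hamiltonian}\mid X=0)\to 1$ is asserted, not proved, and that is where essentially all of the work in the paper lies. Two concrete missing pieces. First, the model: to justify "degrees behave like independent truncated Poissons" and to do any second-moment or union-bound computation in $\dnm{1}$, the paper builds a configuration-type space $\Omega_1$ (Lemma \ref{Om1}), removes loops and parallel edges by $P$- and $L$-switchings (Claim \ref{cl1}, Lemmas \ref{LM}--\ref{switch2}), and then must track the set $ALTERED$ of vertices touched by switches through every subsequent estimate (Lemma \ref{far}); deferring this as "a secondary difficulty" understates it, since without it even your expansion and small-set lemmas are not available in the conditioned model. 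Second, the merging itself: the paper does not merge along short alternating cycles of $\Gamma$ directly. It partitions the edge set into $E_1,E_2,E_3$ \emph{before} Phase 1 (with $E_1$ absorbing all edges at low-degree endpoints, so that the matching threshold is decided entirely by $E_1$), finds the matching in $E_1$ alone, and then uses the still-unexposed randomness of $E_2$ to run an extension--rotation style argument on near-permutation-digraphs (the Out-Phase/In-Phase trees of Lemma \ref{lem9}) that grows $\nu=n^{1/2}\log n$ candidate path endpoints on each side before closing a long cycle, and finally uses $E_3$ for Karp-style patching. Your sketch gives no mechanism for preserving independent randomness for the merging stage, and the "short alternating cycle" switches you propose would need to be shown to exist after conditioning on everything exposed so far — this is precisely the difficulty the three-way edge partition is designed to circumvent. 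Until that is supplied, the argument establishes only the upper bound $\Pr(\text{Hamiltonian})\le\Pr(X=0)+o(1)$.
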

The R.H.S. of \eqref{H2} is the limiting probability that $\dnm{1}$ contains two vertices of in-degree one (resp. out-degree one) that share a common in-neighbour (resp. common out-neighbour).
\subsection{Previous work}
Previous work in this area has focused on $\gnm{k}$, where we condition on $G_{n,m}$ having minimum degree at least $k$. This work was inspired by the result of Koml\'os and Szemer\'edi \cite{KS}: suppose that $m=\frac{n}{2}(\log n+\log\log n+c_n)$. Then,
\begin{align*}
\lim_{n\to\infty}\Pr(G_{n,m}\text{ is Hamiltonian})&=\begin{cases}0&c_n\to-\infty.\\e^{-e^{-c}}&c_n\to c.\\1&c_n\to\infty.\end{cases}\\
&=\lim_{n\to\infty}\Pr(G_{n,m}\text{ has minimum degree at least two}).
\end{align*}
Bollob\'as, Fenner and Frieze \cite{BFF} proved the following. Suppose that  $m=\frac{n}{6}\brac{\log n+6\log\log n+c_n}$. Then
\beq{H3}{
\lim_{n\to\infty}\Pr(\gnm{2}\text{ is Hamiltonian})=\begin{cases}0&c_n\to-\infty,\text{ sufficiently slowly}.\\e^{-e^{-c}/1458}&c_n\to c.\\1&c_n\to\infty.\end{cases}
}
Here the R.H.S. of \eqref{H3}  is the asymptotic probability of there being three vertices of degree two sharing a common neighbour. The restriction ``sufficiently slowly'' is unlikely to be necessary as stated. On the other hand, there needs to be some upper bound on $c_n$ though as we must have $m\geq n$.

Beginning with Bollob\'as, Cooper, Fenner and Frieze \cite{BCFF} there has been an attempt to find out how many random edges we need if we condition on minimum degree at least three.
Anastos \cite{A} has proved that $\gnm{4}$ is Hamiltonian w.h.p. if $m>2n$ and Anastos and Frieze \cite{AF2} have shown that $\gnm{3}$ is Hamiltonian w.h.p. if $m>2.663n$. Because a random cubic graph is Hamiltonian w.h.p. (see Robinson and Wormald \cite{RW1}) it is natural to conjecture that $\gnm{3}$ is Hamiltonian w.h.p. if $m=cn, c\geq 3/2$.

It should be mentioned that the above papers \cite{A}, \cite{BCFF} prove more than Hamiltonicity. They show that in $\gnm{k}$ there are likely to be $\rdown{k/2}$ edge disjoint Hamilton cycles plus an edge disjoint perfect matching if $k$ is odd. It would not be difficult to prove a similar extension of Theorem \ref{th1} as no really new ideas are needed and so we will refrain from doing this, to avoid making the proof more complicated than needed.

\section{Proof of Theorem \ref{th1}}
We will base our proof on a 3-phase method  used in various forms in \cite{CF1}, \cite{DF}, \cite{FKR}, \cite{FS}. In Phase 1 we construct a set of $O(\log n)$ vertex disjoint directed cycles that cover all vertices. In Phase 2, we transform this cover so that each cycle has length at least $n/\log^{1/2}n$. In Phase 3, we {\em patch} these cycles together to create a Hamilton cycle. There is also a Phase 0 in which we partition the edge set into $E_1,E_2,E_3$ for use in the corresponding phase.

We first need to describe a useable model for $\dnm{1}$. For a sequence $\bx=(x_1,x_2,\ldots,x_{2m})\in [n]^{2m}$ we let $D_\bx$ be the multi-digraph with vertex set $[n]$ and edge set\\ $E_\bx=\set{(x_{2j-1},x_{2j}):j=1,2,\ldots,m}$. Let  $d^+_{\bx}(i)=|\set{j:x_{2j-1}=i}|$ be the out-degree of $i\in[n]$ in $D_\bx$ and similarly let  $d^-_{\bx}(i)=|\set{j:x_{2j}=i}|$ be the in-degree of $i\in[n]$ in $D_\bx$.  Then  $\Omega_1=\set{\bx\in [n]^{2m}:d^+_{\bx}(i),d^-_{\bx}(i)\geq 1}$ for all $i\in[n]$.

Given values for the degrees $d_\bx^{\pm}$ we create a random member of $\Omega_1$ by taking a random permutation of the multi-set $\set{d_{\bx}^+(j)\times j:j\in [n]}$ and placing the values in $x_1,x_3,\ldots,x_{2m-1}$ and then taking a random permutation of the multi-set $\set{d_{\bx}^-(j)\times j:j\in [n]}$ and placing the values in $x_2,x_4,\ldots,x_{2m}$. (The notation $d\times a$ means that the multi-set contains $d$ copies of $a$).

For $\bx\in\Omega_1$, let $L_\bx=\set{j\in[m]:x_{2j-1}=x_{2j}}$ be the set of loops in $D_\bx$,  let $M_\bx=\{j\in[m]:\exists j'\neq j\text{ s.t. }(x_{2j'-1},x_{2j'})=$ $(x_{2j-1},x_{2j})\}$ define the set of multiple edges in $D_\bx$; and let $\D_\bx=\max_j(d_{\bx}^-(j)+d_{\bx}^+(j))$. (We show later, see Lemma \ref{LM} below, that if $m\leq n\log n$ then w.h.p. there are severe restrictions on the closeness and multiplicity of loops and multiple edges.)

Let $\Omega_1^*=\set{\bx\in \Omega_1:L_\bx=M_\bx=\emptyset}$. We have that
\beq{sizeD}{
D\in \cdnm{1}\text{ implies that }|\set{\bx\in \Omega^*_1:D_\bx=D}|=m!.
}
Consequently, choosing $\bx$ (near) uniformly from $\Omega_1^*$ and taking $D_\bx$ is equivalent to sampling (near) uniformly from $\cdnm{1}$. By sampling uniformly from $\Omega_1$ and then using {\em switchings} to remove any loops or parallel edges we obtain a space $\Okl{0,0}$ which is almost all of $\Omega_1^*$  and whose entries are (near) uniformly distributed $\Okl{0,0}$.

We can generate a random member of $\Omega_1$ as follows: Let $Z=Z(z)$ be a {\em truncated Poisson} random variable with parameter $z\geq 1$, where
$$\Pr(Z=k)=\frac{z^k}{k!(e^z-1)},\hspace{1in}k=1,2,\ldots\ .$$
Here $z$ satisfies
\begin{equation}\label{2}
\E(Z)=\frac{ze^z}{e^z-1}=\r \text{ where }\r=\frac{m}{n}, \quad\text{ and }\quad \Var(Z)=\s^2=\frac{ze^z(e^z-1-z)}{(e^z-1)^2}.
\end{equation}
Note that 
\beq{zrho}{
z\sim\r\text{ if }\r\to\infty. 
}
Indeed,
\[
z\leq \r=z+\frac{1}{1+\frac{z^2}{2!}+\frac{z^2}{3!}+\cdots+}\leq z+1.
\]
Note also that $f(x)=xe^x/(e^x-1)$ is strictly monotone increasing and $f(0)=1,f(\infty)=\infty$ and so the first equation defines $z$ uniquely. This follows from $(e^x-1)^2f'(x)=e^{2x}-(1+x)e^x>0$.

 and note 
\begin{lemma}\label{Om1}
Let $Z_i',Z_i'',i\in [n]$ be independent copies of a truncated Poisson random variable $Z(z)$.
Let $\bx$ be chosen randomly from $\Omega_1$. Then $\{d_\bx^+(i)\}_{j\in [n]}$ is distributed as $\{Z_i'\}_{i\in [n]}$ conditional on $\sum_{j\in [n]}Z_i'=m$ and similarly for $\{d_\bx^-(i)\}_{i\in [n]}$ with respect to $Z_i''$.
\end{lemma}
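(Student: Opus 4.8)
The plan is to compute, for a fixed degree sequence $\bd=(d_1,\dots,d_n)$ with $d_i\ge 1$ and $\sum_i d_i=m$, the probability that $\{d_\bx^+(i)\}_{i\in[n]}=\bd$ when $\bx$ is chosen uniformly from $\Omega_1$, and to check that this agrees with the corresponding conditional probability for the i.i.d.\ truncated Poisson vector. First I would observe that by the generative description given just before the lemma, a uniform element of $\Omega_1$ is obtained by: (i) choosing the out-degree sequence according to whatever distribution it actually induces, (ii) placing a uniform random permutation of the multiset $\{d_i\times i\}$ into the odd coordinates, and symmetrically for in-degrees in the even coordinates. Since the number of odd-coordinate arrangements realizing a given out-degree sequence $\bd$ is exactly the multinomial coefficient $\binom{m}{d_1,\dots,d_n}=m!/\prod_i d_i!$, and since the even coordinates are handled independently, the induced probability of the out-degree sequence being $\bd$ is proportional to $\prod_i \frac{1}{d_i!}$, with the normalizing constant being a sum over all admissible sequences. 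The key point is that the in-coordinates contribute a factor that does not depend on $\bd$, so they factor out and do not affect the conditional law of the out-degrees.

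Next I would write down the law of $\{Z_i'\}_{i\in[n]}$ conditioned on $\sum_i Z_i'=m$. By independence,
\[
\Pr\brac{Z_1'=d_1,\dots,Z_n'=d_n \;\Big|\; \sum_i Z_i'=m} = \frac{\prod_{i=1}^n \frac{z^{d_i}}{d_i!(e^z-1)}}{\Pr\brac{\sum_i Z_i'=m}} = \frac{z^{\sum_i d_i}}{(e^z-1)^n \prod_i d_i!} \cdot \frac{1}{\Pr\brac{\sum_i Z_i'=m}}.
\]
Because $\sum_i d_i=m$ is fixed, the factor $z^m/(e^z-1)^n$ is a constant independent of $\bd$, so this conditional probability is also proportional to $\prod_i \frac{1}{d_i!}$ over the admissible sequences (those with all $d_i\ge1$ summing to $m$ — exactly the support of $\Omega_1$'s out-degree sequences). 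Two probability distributions on the same finite set that are proportional to the same function must be equal, which gives the claim for the out-degrees; the argument for the in-degrees is identical, and the independence of the odd- and even-coordinate permutations in the construction of $\Omega_1$ yields that the two degree sequences are generated independently, matching the independence of $\{Z_i'\}$ and $\{Z_i''\}$.

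I do not expect a serious obstacle here: the statement is essentially the standard "conditioning an i.i.d.\ sequence on its sum reproduces the configuration-model degree distribution" fact, specialized to the truncated Poisson, which is the maximum-entropy (exponential-family) distribution on $\{1,2,\dots\}$ matching the mean constraint. The only points requiring a little care are (a) verifying that the support of the conditioned i.i.d.\ vector is exactly the set of realizable out-degree sequences of $\Omega_1$ — which holds because $Z$ is supported on $\{1,2,\dots\}$, precisely the minimum-degree-one condition — and (b) confirming that the generative procedure for $\Omega_1$ really does sample uniformly, i.e.\ that each $\bx\in\Omega_1$ with out-degree sequence $\bd$ is equally likely, which follows since all $m!$ orderings of the odd coordinates are equally likely and likewise for the even coordinates. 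The role of $z$ solving $\E(Z)=\rho=m/n$ in \eqref{2} is not needed for the exact conditional identity; it only becomes relevant later when one wants the unconditioned marginals to concentrate near the conditioning value, so I would not dwell on it in this proof.
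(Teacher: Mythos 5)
Your proposal is correct and follows essentially the same route as the paper's Appendix A proof: both count the $m!/\prod_i d_i!$ arrangements realizing a given out-degree sequence to get a law proportional to $\prod_i 1/d_i!$ on admissible sequences, and both observe that the $z^m/(e^z-1)^n$ factor in the conditioned truncated-Poisson law is constant on the event $\sum_i d_i=m$, so the two distributions coincide. Your additional remarks on support matching and the irrelevance of the specific choice of $z$ are accurate but not needed beyond what the paper already does.
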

The proof of this lemma is very similar to Lemma 4 of  Aronson, Frieze and Pittel \cite{AFP}, the proof is given in Appendix A.

We define the events
\beq{Edash}{
\cE'=\set{\sum_{j\in [n]}Z_i'=m}\text{ and }\cE''=\set{\sum_{j\in [n]}Z_i''=m}.
}
Lemma \ref{LM} (see Appendix B) proves some properties $\cP$ of $\Omega_1$ that hold w.h.p.  Let $\Okl{0,0}= \Omega_1^* \cap \cP$ and more generally let $\Okl{k,\ell}=\set{\bx\in \Omega_1:|L_\bx|=k,|M_\bx|=\ell\text{ and }\cP}$.

Given $\bx\in \Okl{k,\ell}$ with $\ell>0$, we will define a $P$-switch that removes two copies of edge $(x,y)$ and replaces them by loops $(x,x),(y,y)$. More formally, suppose that $i,j\in M_\bx$. Then a $P$-switch produces $\bx'$ where we make the replacements $x'_{2i}\gets x_{2j-1},x'_{2j-1}\gets x_{2i}$. After $\ell$ such $P$-switches we will have produced a member of $\Okl{k+2\ell,0}$.

Given $\bx\in \Okl{k,0}$ with $k>0$, we will define an $L$-switch that removes a loop $(x,x)$ by choosing an edge $(a,b)$ and replacing the pair by $(a,x),(x,b)$. More formally, suppose that $i\in L_\bx$. Let $j\notin L_\bx$. Then an $L$-switch produces $\bx'$ where we make the replacements $x'_{2i-1}\gets x_{2j-1},x'_{2j-1}\gets x_{2i-1}$.  After $k$ $P$-switches we will have produced a member of $\Okl{0,0}$. (The above can make new parallel edges, but they are few in number and we show in Lemma \ref{far} that we can exclude switches that create new parallel edges.)

$P$-switches preserve the precise degree sequence $d_{\bx}^+(i),d_{\bx}^-(i),i\in[n]$ for $\bx\in \Omega_1$ but $L$-switches do not. On the other hand, $L$-switches preserve the total degrees $d_{\bx}^+(i)+d_{\bx}^-(i),i\in[n]$ for $\bx\in \Omega_1$. However, they do preserve $d_{\bx}^+(i)\geq 1,d_{\bx}^-(i)\geq 1$ w.h.p. for $i\in[n]$.

We make the following claim: for proof see Appendix B.
\begin{claim}\label{cl1}
Suppose that $\bx$ is a random member of $\Omega_1$ and that $|L_\bx|=k,|M_\bx|=\ell$. Then after $\ell$ $P$-switches and $k+2\ell$ random $L$-switches we obtain $\by\in \Okl{0,0}$ such that (i) \by\ is almost uniform in $\Okl{0,0}$ and (ii) $|\Okl{0,0}|= (1-o(1))|\Omega_1^*|$. (By almost uniform in (i) we mean the following: if \bz\ is any member of $\Okl{0,0}$, then $\Pr(\by=\bz)=(1+o(1))|\Okl{0,0}|^{-1}$.)
\end{claim}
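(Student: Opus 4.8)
The plan is to analyze the two families of switches ($P$-switches eliminating parallel edges, $L$-switches eliminating loops) as Markov-chain steps on the graded spaces $\Okl{k,\ell}$, and to show that (a) the switch operations are ``nearly uniform'' in the sense that each target configuration is reached with essentially the same probability, and (b) the correction factors between consecutive levels are $1+o(1)$, so that the cumulative bias after $\ell$ $P$-switches and $k+2\ell$ $L$-switches is still $1+o(1)$. First I would record the relevant a priori estimates that we are allowed to assume: by Lemma~\ref{Om1} the degree sequence of a random $\bx\in\Omega_1$ is a conditioned i.i.d.\ truncated-Poisson vector, from which standard concentration gives $\D_\bx=O(\log n/\log\log n)$ w.h.p., and a first-moment computation (the content of Lemma~\ref{LM}) gives that $|L_\bx|=k$ and $|M_\bx|=\ell$ are both $O(\log n)$ w.h.p., with no multiple loops, no triple edges, and no loop incident to a double edge. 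These bounds are what make all the error terms below genuinely $o(1)$.

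Next I would set up the reverse-counting argument level by level. Fix a target $\bz\in\Okl{0,0}$. For the $L$-switches: an $L$-switch applied to $\bx\in\Okl{k',0}$ picks a loop $i\in L_\bx$ and a non-loop position $j$, and splices; its inverse, applied to $\bz$, picks an ordered pair of ``compatible'' edges (two edges $(x,a),(x,b)$ out of the same vertex with the right incidence pattern) and merges them into a loop $(x,x)$ plus the edge $(a,b)$. The number of inverse images of $\bz$ is thus $\sum_{x} d^+(x)(d^+(x)-1)\cdot(\text{something})$, which by the degree bound is $(1+o(1))$ times a quantity depending only on simple global statistics of $\bz$ (essentially $\sum_x d^+(x)^2 \approx$ a fixed value up to lower order); so every $\bz$ has the same number of inverse images up to a $1+o(1)$ factor, and the forward probability is correspondingly uniform up to $1+o(1)$. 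The same bookkeeping, with the roles of the in-positions played symmetrically, handles $P$-switches: the number of ways a $\bz\in\Okl{k',0}$ arises from some $\bx\in\Okl{k',2}$ by a single $P$-switch is, up to $1+o(1)$, a global constant, because both the loops created and the edges destroyed involve only $O(\D_\bx)$ local choices and the switch-exclusion of Lemma~\ref{far} removes the rare bad cases (switches creating new parallel edges) at a cost of another $1+o(1)$ factor. Multiplying the $k+2\ell+\ell = k+3\ell = O(\log n)$ individual $1+o(1)$ factors still leaves $1+o(1)$, which proves part~(i).

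For part~(ii), $|\Okl{0,0}|=(1-o(1))|\Omega_1^*|$, I would argue that $\Omega_1^*\setminus\Okl{0,0}$ is exactly the set of $\bx\in\Omega_1$ with $L_\bx=M_\bx=\emptyset$ that fail some property in $\cP$; since each property in $\cP$ holds w.h.p.\ over $\Omega_1$ (Lemma~\ref{LM}), and conditioning on $L_\bx=M_\bx=\emptyset$ only changes probabilities by a bounded factor (indeed by $1+o(1)$, since $\Pr(L_\bx=M_\bx=\emptyset)$ is bounded away from $0$ as $\rho=m/n=\Theta(\log n)$ — here one uses the truncated-Poisson model and the fact that the expected number of loops plus parallel edges is $O(1)$ actually it is $O(\rho^2/n)=o(1)$ in the relevant regime, wait: it is $\Theta(\log n)$, so one must be more careful and instead compare $|\Okl{0,0}|$ to $|\Okl{k,\ell}|$ summed over small $k,\ell$ using the switch bijections themselves), the complement is $o(1)$ fraction. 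The honest route is: the switches give near-bijections showing $|\Okl{k,\ell}| = (1+o(1))\,\frac{\text{(Poisson loop/parallel factors)}}{k!\,\ell!}\,|\Okl{0,0}|$, and summing the geometric-type series over all $(k,\ell)$ recovers $|\Omega_1|$ up to $1+o(1)$, while $|\Omega_1^*| = |\Okl{0,0}| + (\text{terms from }\cP\text{-violations})$, and the $\cP$-violating terms are $o(1)\cdot|\Omega_1^*|$ by Lemma~\ref{LM}; rearranging gives (ii).

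The main obstacle I anticipate is making the ``nearly uniform'' claim genuinely uniform rather than merely uniform-up-to-polylog: each single switch is only uniform up to $1+O(\D_\bx/\rho)$ or similar, and we compose $\Theta(\log n)$ of them, so one must verify that the per-step error is $o(1/\log n)$, which forces the degree bound $\D_\bx = o(\rho/\log n)$; checking that this holds w.h.p.\ in the regime $\rho=\tfrac12(\log n+2\log\log n+c_n)$ — and that the switch operations don't destroy the minimum-degree-one constraint except with $o(1)$ probability — is the delicate part, and is presumably where the detailed computations in Appendix~B are spent. The rest is careful but routine bookkeeping of inverse-image counts.
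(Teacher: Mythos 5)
Your proposal follows the same architecture as the paper's Appendix B: grade $\Omega_1$ by $(|L_\bx|,|M_\bx|)$, treat each switch as an edge in a bipartite graph between adjacent levels, count forward and inverse images (with the inverse $L$-switch count governed by $S_1=\sum_x d^+(x)(d^+(x)-1)$, which must be shown concentrated), compose the $O(\log^{O(1)}n)$ per-step correction factors for (i), and for (ii) sum the resulting ratios $|\Okl{k,\ell}|/|\Okl{0,0}|$ over small $(k,\ell)$ to compare with $|\Omega_1|$ and then with $|\Omega_1^*|$. However, there is one quantitative gap that, as written, prevents part (ii) from closing.

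You justify $|\Omega_1^*\setminus\Okl{0,0}|=o(1)\cdot|\Omega_1^*|$ by appeal to Lemma \ref{LM} viewed as a collection of w.h.p.\ statements over $\Omega_1$. That is not enough. A w.h.p.\ bound only gives $|\{\bx\in\Omega_1:\neg\cP\}|=o(|\Omega_1|)$, whereas $|\Omega_1^*|/|\Omega_1|\sim e^{-\th(\th+1)}=e^{-\Theta(\log^2n)}$ in this regime: with $\r=\Theta(\log n)$ the expected number of parallel-edge pairs is $\Theta(\r^2)=\Theta(\log^2 n)$ and of loops is $\Theta(\r)$ (your own parenthetical wavers between $O(1)$, $o(1)$ and $\Theta(\log n)$ here, and none of these is right). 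So to conclude that the $\cP$-violating part of $\Omega_1^*$ is negligible \emph{relative to $\Omega_1^*$}, you need the failure probability of $\cP$ to be $o\brac{e^{-\Theta(\log^2 n)}}$, not merely $o(1)$. This is precisely why the paper's Lemma \ref{LM} tracks failure probabilities of the form $(\log n)^{-\Omega(\log^2 n)}$ for the properties entering $\cP_1$ (maximum degree, $|L_\bz|$, $|M_\bz|$, concentration of $S_1$), obtained via truncation-plus-Hoeffding and high-moment arguments rather than first moments. Your sketch neither supplies nor identifies the need for these superpolynomially small failure probabilities, and without them the displayed chain $|\Okl{0,0}|\leq|\Omega_1^*|\leq|\Okl{0,0}|+|\{\bx:\neg\cP_1\}|$ gives nothing.

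Two smaller remarks. First, the obstacle you anticipate for (i) — that the per-step error might be of order $\D/\r$, forcing $\D=o(\r/\log n)$ — does not arise: in an $L$-switch the companion edge is drawn from all $m$ edges, so the per-step multiplicative error is $O(\D_0/m)+O(\D_0/\E(S_1))=O(\log^{O(1)}n/n)$, and composing $O(\log^4n)$ switches is harmless. Second, $\D_\bx=O(\log n/\log\log n)$ is false when $\r=\Theta(\log n)$ (the maximum degree is $\Theta(\log n)$); the paper simply works with the generous bound $\D_0=\log^2n$, and the concentration of $S_1$ needs a genuine large-deviation argument, not just a maximum-degree bound as you suggest.
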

By a random $L$-switch, we mean choose one of the $k$ loops and then choose a random edge $(a,b)$. Lemma \ref{LM} below implies that w.h.p. we need at most $3e^2\log^4n$ switches altogether.

With a little bit of effort, we can use the analysis for Claim \ref{cl1} to obtain an estimate of the size of $\cdnm{1}$, see Appendix C.
\begin{theorem}\label{th3}
\[
|\cdnm{1}|\sim m!\frac{(e^z-1)^{2n}e^{-z(z+1)}}{2\p \s z^{2m}}.
\]
\end{theorem}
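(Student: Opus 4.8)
The plan is to carry out everything in the pairing space $\Omega_1$. By \eqref{sizeD} we have $|\Omega_1^*|=m!\,|\cdnm1|$, and by Claim~\ref{cl1}(ii) $|\Okl{0,0}|=(1+o(1))|\Omega_1^*|$, so it suffices to find the asymptotics of $|\Okl{0,0}|$; I will write $|\Okl{0,0}|=(1+o(1))\big(|\Okl{0,0}|/|\Omega_1|\big)\,|\Omega_1|$ and estimate the two factors separately. For $|\Omega_1|$: filling the odd coordinates of $\bx$ so that every vertex occurs is the same as choosing a surjection $[m]\to[n]$, and the even coordinates are chosen independently in the same way, so $|\Omega_1|=\big(m!\,[x^m](e^x-1)^n\big)^2$. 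To estimate the coefficient I use the truncated-Poisson identity
\beq{aux1}{[x^m](e^x-1)^n=\frac{(e^z-1)^n}{z^m}\;\Pr\Big(\sum_{i=1}^nZ_i=m\Big),}
which holds because $\sum_iZ_i$ has probability generating function $\big((e^{wz}-1)/(e^z-1)\big)^n$; since $\E\sum_iZ_i=n\rho=m$ by \eqref{2}, a local central limit theorem evaluated at the mean gives $\Pr(\sum_iZ_i=m)\sim(2\pi n\sigma^2)^{-1/2}$, hence the asymptotics of $|\Omega_1|$ in terms of $n,m,z,\sigma$.

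For the ratio $|\Okl{0,0}|/|\Omega_1|$ I reuse the switching analysis behind Claim~\ref{cl1}. Since the properties in $\cP$ fail with probability $o(1)$ in $\Omega_1$ (Lemma~\ref{LM}), $\sum_{k,\ell}|\Okl{k,\ell}|=(1+o(1))|\Omega_1|$. Double counting the $L$-switches between $\Okl{k+1,\ell}$ and $\Okl{k,\ell}$ gives $|\Okl{k+1,\ell}|/|\Okl{k,\ell}|=(1+o(1))\,z/(k+1)$: from a configuration with $k+1$ loops there are $(1+o(1))(k+1)m$ legal $L$-switches, while a configuration $\bx\in\Okl{k,\ell}$ arises from $(1+o(1))\sum_v d^+_\bx(v)d^-_\bx(v)$ reverse switches, and this degree statistic equals $(1+o(1))zm$ uniformly because it concentrates. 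Double counting the $P$-switches likewise controls the dependence of $|\Okl{k,\ell}|$ on $\ell$, the relevant statistics now being $\sum_v d^+_\bx(v)(d^+_\bx(v)-1)$ and $\sum_v d^-_\bx(v)(d^-_\bx(v)-1)$, each again $(1+o(1))zm$. Multiplying the ratios out, $|\Okl{k,\ell}|/|\Okl{0,0}|=(1+o(1))\,z^{k+\ell}/\big(k!\,(\ell/2)!\big)$, and summing over all $k\geq0$ and all even $\ell\geq0$ gives $\sum_{k,\ell}|\Okl{k,\ell}|/|\Okl{0,0}|\sim e^{z}\,e^{z^2}=e^{z(z+1)}$. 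Hence $|\Okl{0,0}|=(1+o(1))\,e^{-z(z+1)}|\Omega_1|$; substituting the estimate of $|\Omega_1|$ and dividing by $m!$ yields Theorem~\ref{th3}.

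The main obstacle is making the switching ratios correct to within a $1+o(1)$ factor even after they have been multiplied together. The $P$-switches do not preserve the in/out split of the degree sequence, so along a chain of switches the degree sequence drifts by up to $O(\D_\bx)$ coordinates per switch; one must show this drift does not alter the switch counts to the order needed, and — since $\cP$ allows up to $O(\log^4n)$ switches (Lemma~\ref{LM}) — that each individual ratio is correct to within $1+O(n^{-c})$ so that the product remains $1+o(1)$. This forces: the degree-sequence sums above to concentrate with polynomially small relative error (which follows from the truncated-Poisson form of the degree sequence in $\Omega_1$ given by Lemma~\ref{Om1}); the exclusion, via Lemma~\ref{far}, of the few switches that create new loops or parallel edges; and the exclusion of the few configurations violating $\cP$ (triple edges, a loop meeting a double edge, atypically large $\D_\bx$), which by Lemma~\ref{LM} cost only $o$-errors. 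A minor further point is that the local central limit theorem must be supplied with an explicit error term, which is routine here because the point of evaluation is exactly the mean.
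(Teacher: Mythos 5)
Your proposal follows essentially the same route as the paper: the identity $|\Omega_1^*|=m!\,|\cdnm1|$, the generating-function/local-CLT evaluation of $|\Omega_1|=\big(m!\,[x^m](e^x-1)^n\big)^2$, and the switching computation giving $|\Okl{0,0}|\sim e^{-z(z+1)}|\Omega_1|$ are exactly the paper's \eqref{sizeD}, \eqref{trick}--\eqref{tr1} together with \eqref{ll1}, and \eqref{f} (Corollary \ref{cor1}). The only cosmetic differences are that you re-derive the switching ratios inline (using the equivalent reverse-switch statistic $\sum_v d^+_\bx(v)d^-_\bx(v)$ in place of the paper's $S_1$) rather than citing Claim \ref{cl1}, and that your bookkeeping, like the paper's own, actually produces $2\pi n\sigma^2$ in the denominator, suggesting the factor $2\pi\sigma$ in the theorem statement is a typo common to both.
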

\subsection{Phase 0}
There is a natural bijection between digraphs and bipartite graphs. Given a digraph $D$ on vertex set $[n]$, we can define a bipartite graph $G(D)$ with vertex partition $=\set{a_i:i\in[n]}$, $B=\set{b_j:j\in [n]}$ and an edge $\set{a_i,b_j}$ for every edge $(i,j)$ of $D$. If $D$ satisfies $\min\set{\d^-,\d^+}\geq 1$ then $G(D)$ has minimum degree at least one and vice-versa. Let $G_\bz=G(D_\bz)$ for $\bz\in \Omega_1$.

Our input will be a sequence \by\ constructed as in Claim \ref{cl1}, starting with a random $\bx\in \Omega_1$. We will discuss Hamiltonicity in the context of $D_\by$. This  phase  partitions $E_\by$ into three subsets $E_1,E_2,E_3$ each with different purpose. We first show that the digraph induced by $E_1$ contains a collection $\c{C}_1$ of $O(\log n)$ vertex disjoint directed cycles that cover all vertices, i.e. a vertex cycle cover. We then use $E_2$ to transform $\c{C}_1$ into a vertex cycle cover $\c{C}_2$ in which each cycle $C\in \c{C}_2$ has size at least $n/\log^{1/2}n$. We use $E_3$ to transform $\cC_2$ into a Hamilton cycle.

Let $SMALL$ denote the set of vertices $v$ such that $d^+_\bx(v)+d^-_\bx(v)\leq \log n/100$.  Let $ALTERED$ denote the set of vertices that are involved in switches. Let $dist_\bz(v,w)$ denote the distance between vertices $v$ and $w$ in either $D_\bz$ or $G_\bz$, depending on context. Paths in $G_\bz$ when considered in $D_\bz$ alternate in orientation.
\begin{lemma}\label{far}
Suppose that $m\geq \frac25n\log n$. Suppose that \by\ is the result of switchings w.r.t. a random $\bx\in \Omega_1$. Then w.h.p.,
\begin{enumerate}[(a)]
\item $|SMALL|\leq n^{7/8}$.
\item No vertex is involved in four or more switches.
\item No member of $SMALL$ is incident to an edge involved in any of the switches used to construct \by.
\item No member of $SMALL$ lies on a cycle of $D_\by$ or of $G_\by$ of length at most $\ell_0=\log n/(20\log\log n)$.
\item $u,v,w\in SMALL\cup ALTERED$ implies that $dist_\by(u,v)+dist_\by(v,w)\geq \ell_0$ in $D_\by$ or in $G_\by$.
\end{enumerate}
\end{lemma}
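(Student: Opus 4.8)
The plan is to reduce every part of the lemma to a first-moment estimate in the truncated-Poisson model of Lemma~\ref{Om1}, using Lemma~\ref{LM} to control the number and structure of switches. \textbf{Parts (a) and (b).} By Lemma~\ref{Om1} the sequences $\{d^+_\bx(i)\}$ and $\{d^-_\bx(i)\}$ are distributed as independent copies of $Z=Z(z)$ conditioned on $\cE'$, resp.\ $\cE''$; since $\Pr(\cE'),\Pr(\cE'')=\Theta(1/\s)=\Theta((\log n)^{-1/2})$, a local-limit-theorem argument (as in \cite{AFP}) shows this conditioning inflates the probability of any fixed degree event by at most a polylogarithmic factor, so I will estimate unconditionally. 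With $z=\r-o(1)\ge(\tfrac25-o(1))\log n$ and $t=\tfrac{\log n}{100}$, one has $\Pr(Z\le t)=O\big((ez/t)^t/(e^z-1)\big)=O(n^{-1/3+o(1)})$ (worst when $\r$ is smallest), hence $\Pr(v\in SMALL)=O(n^{-2/3+o(1)})$, $\E|SMALL|=O(n^{1/3+o(1)})$, and (a) follows from Markov. For (b), Lemma~\ref{LM} gives $\D_\bx=O(\log n)$ and $|L_\bx|+|M_\bx|=O(\log^4n)$, so there are $S=O(\log^4n)$ switches, each modifying $O(1)$ edges; a direct count bounds the expected number of vertices lying in two distinct double edges by $O(\r^4/n)=o(1)$, and Lemma~\ref{LM} rules out double loops and loops meeting double edges, so each vertex lies in at most two ``structural'' switches (its own $P$-switch plus the $L$-switch removing the loop that $P$-switch creates at it, or the $L$-switch removing its single original loop). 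It is otherwise involved only if one of the $\le 2S$ randomly chosen $L$-switch edges hits it, which has probability $O(\D_\bx/m)=O(1/n)$ per choice, so with probability $1-O(S^2/n)=1-o(1)$ no vertex receives two such edges, giving at most $3<4$ switches per vertex.

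\textbf{Part (c).} First I would show no SMALL vertex meets a loop or a double edge: conditioned on $v$ having total degree at most $\tfrac{\log n}{100}$, the probability $v$ lies on a loop is $O(d^+_\bx(v)d^-_\bx(v)/m)=O(\log n/n)$ and that it lies on a double edge is $O((d^+_\bx(v)^2+d^-_\bx(v)^2)/n)=O(\log^2n/n)$, so $\sum_v\Pr(v\in SMALL)\cdot O(\log^2n/n)=o(1)$. Thus SMALL vertices are untouched by $P$-switches and by the loop-removal halves of $L$-switches. For the random edges $(a,b)$ chosen by the $L$-switches, on the event of~(a) we have $\sum_{v\in SMALL}(d^+_\bx(v)+d^-_\bx(v))\le n^{7/8}\tfrac{\log n}{100}$, so a single random edge meets $SMALL$ with probability $O(n^{-1/8})$, and a union bound over the $O(\log^4n)$ choices gives (c). In particular SMALL vertices and their incident edges are identical in $D_\bx$ and $D_\by$.

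\textbf{Parts (d) and (e).} Revealing the switch process first fixes $ALTERED$ as a known set of size $O(\log^4n)$; I would then bound short sub-structures in $D_\by$ (or $G_\by$) by first moments. For (d): a cycle of length $s$ through a fixed SMALL vertex $v$ that uses only $E_\bx$-edges has probability $O\big(d^+_\bx(v)d^-_\bx(v)(n\r^2)^{s-1}/m^s\big)=O(\r^{s-2}\log^2n/n)$, and summing over $s\le\ell_0$ (geometric, dominated by the last term), over $v$ weighted by $\Pr(v\in SMALL)$, and using
\[
\r^{\ell_0}=O\big((\log n)^{\ell_0}\big)=O\big(e^{(\log n)/20}\big)=O(n^{1/20})
\]
(the value $\ell_0=\tfrac{\log n}{20\log\log n}$ being tuned exactly so this path-length penalty is a small power of $n$), yields expectation $O(n^{-2/3+1/20+o(1)})=o(1)$; if the cycle instead uses one of the $O(\log^4n)$ switch edges it also contains an $ALTERED$ vertex, and the same sum, with the extra selection factor $O(\log^4n\cdot(d^+_\bx+d^-_\bx)/m)=O(\mathrm{polylog}/n)$ for that vertex, is smaller still. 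For (e): the bad event is contained in the event that $D_\by$ (or $G_\by$) has a union $P_1\cup P_2$ of a $u$-to-$v$ path with a $v$-to-$w$ path, $|P_1|+|P_2|<\ell_0$ and $u,v,w\in SMALL\cup ALTERED$, i.e.\ a subgraph with fewer than $\ell_0$ edges carrying three designated special vertices; its first moment has the same shape as in (d) --- the length budget contributing $O(n^{1/20})$, each designated SMALL vertex a factor $O(n^{-1/3+o(1)})$, and each designated ALTERED vertex a factor $O(\mathrm{polylog}/n)$ --- so every sub-case (three SMALL, two SMALL and one ALTERED, \dots, three ALTERED) contributes $o(1)$.

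The genuinely delicate point is the bookkeeping around $ALTERED$ in (d) and (e): being produced by the random switch process rather than by the degree sequence, it cannot be handled by a union bound over a fixed family. The remedy is to reveal the switches first, use Lemma~\ref{LM} and part~(b) to pin $|ALTERED|=O(\log^4n)$, and exploit that its ``random'' members are endpoints of uniformly chosen edges, so a fixed vertex of degree $d$ lies in $ALTERED$ with probability $O(\log^4n\cdot d/m)$ --- a penalty that, substituted into the first-moment sums for the $O(n^{-2/3})$ rarity penalty of SMALL, is even stronger; combined with keeping track of which configurations must meet an $ALTERED$ vertex (namely those using a switch edge), this closes the argument comfortably.
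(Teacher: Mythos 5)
Your proposal is correct and follows essentially the same route as the paper: local-limit-theorem deconditioning plus a first-moment/Markov bound for (a), the structural constraints of Lemma \ref{LM} together with a union bound over the $O(\log^4 n)$ random $L$-switch edge choices for (b) and (c), and first-moment counts over short cycles/path-pairs in which each designated vertex pays either the $SMALL$ rarity factor or the $O(\log^4 n\,\D_0/m)$ probability of being switch-involved for (d) and (e). Your explicit handling of configurations that use a switch edge (forcing an $ALTERED$ vertex onto the cycle or path) matches the bracketed term in the paper's bound for (e) and is, if anything, spelled out more carefully than in the paper's treatment of (d).
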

\begin{proof}
(a)
\begin{align}
\E(|SMALL|)&\leq 2n\Pr\brac{d_\bx^+(1)\leq \log n/100\
\mid \cE'}\nn\\
&=2n\sum_{k=1}^{\log n/100}\Pr\brac{(d_\bx(1)=k)\wedge\brac{\sum_{i=2}^nZ_i'=m-k} \bigg|\  \cE'}\nn\\
&=2n\sum_{k=1}^{\log n/100}\frac{\Pr(Z_1'=k)\Pr\brac{\sum_{i=2}^nZ_i'=m-k}}{\Pr\brac{\sum_{i=1}^nZ_i'=m}}\label{<<}\\
&\sim 2n\sum_{k=1}^{\log n/100}\Pr(Z_1'=k)\label{<<<}\\
&=2n\sum_{k=1}^{\log n/100}\frac{z^k}{k!(e^z-1)}\nn\\
&\leq 3n\bfrac{ze}{\log n/100}^{\log n/100}e^{-z}\nn\\
&\leq n^{3/4}.\nn
\end{align}
We use Proposition \ref{propx} from Appendix B to obtain \eqref{<<<} from \eqref{<<}. The final inequality uses the fact that $z\sim m/n$ if $m/n\to\infty$. Our bound on $|SMALL|$ follows from the Markov inequality.

(b) The probability that a vertex is part of the randomly chosen edge of an $L$-switch is at most $\D_0/m$. Here, throughout the paper,
\[
\D_0=\log^2n
\]
 is a high probability bound on the maximum degree, (proved in Lemma \ref{LM} below). So the probability there is some vertex that is chosen twice in this way is at most $n(\D_0/m)^2=o(1)$. Apart from this, a vertex can be involved twice, once in a $P$-switch and once in an $L$-switch, w.h.p. See Lemma \ref{LM} in the appendix for the constraints on loops and parallel edges.

(c) Assuming (a), the probability that (c) fails is w.h.p. at most $O(n^{7/8}\log^4n\cdot n^{-1})$. The factor $\log^4n$ comes from the bound of $O(\log^4n)$ on the number of switches needed, which in turn follows from Lemma \ref{LM}(b)(g) below.

(d) The probability that (d) fails can be bounded by
\mults{
2\sum_{k=2}^{\ell_0}k\binom{n}{k}k!\binom{m}{k}\bfrac{\D_0}{m}^{2k}\sum_{\ell=0}^{\log n/100} \binom{m-k}{\ell}\frac{1}{n^\ell}\brac{1-\frac{1}{n}}^{m-k-\ell}\\
\leq 3(\log n)^{2\log n/100}(100e)^{\ell_0}n^{-1/3+o(1)}=o(1).
}
{\bf Explanation:} After choosing the vertices of the cycle and then selecting a vertex of the cycle in at most $\binom{n}{k}k!k$ ways, we bound the probability that the edges exist by $\binom{m}{k}\bfrac{\D_0}{m}^{2k}$ and then bound the probability that a selected vertex has few neighbours outside the cycle.

(e) The probability that (e) fails can be bounded by
\mults{
\sum_{k=1}^{\ell_0}\binom{n}{k+1}k!\binom{k}3\binom{m}{k}\bfrac{\D_0}{n}^{2k}\brac{\bfrac{3e^2\log^4n\D_0}{m}+\sum_{\ell=0}^{\log n/100}\binom{m-k}{\ell}\frac{1}{n^\ell}\brac{1-\frac{1}{n}}^{m-k-\ell}}^3\\
=o(1).
}
{\bf Explanation:} this is similar to (d). the expression $\binom{n}{k+1}k!k$ counts paths $u\to v\to w$ and $\binom{m}{k}\bfrac{\D_0}{n}^{2k}$ bounds the probability the edges exist. The large bracket is the sum of the probability of being part of a switch and the probability that a selected vertices have few neighbours outside the path.
\end{proof}
We now discuss the construction of $E_1,E_2,E_3$. Starting with a random $\bx\in \Omega_1$ we construct $\by=y_1,y_2,\ldots,y_{2m}$ randomly as in Claim \ref{cl1}. We let $j_1=n\log n/5$ and initialise $E_1$ as $\set{(y_{2j-1},y_{2j}):j\leq j_1}$. Then we go through \by\ in order $j=n\log n/5+1,\ldots,m$ and add $(y_{2j-1},y_{2j})$ to $E_1$ if either $y_{2j-1}$ has appeared less than $\log n/100$ times in $y_1,y_3,\ldots,y_{2j_1-1}$ or if $y_{2j}$ has appeared less than $\log n/100$ times in $y_2,y_4,\ldots,y_{2j_1}$. We randomly place the edges $E_\by\setminus E_1$ into $E_2,E_3$, each with probability 1/2, and then re-order the edges so that $E_2\cap M$ appears before $E_3\cap M$ where $M$ is the set of edges induced by $y_i,i> n\log n/5$. This re-ordering is not very important, it just helps in our description of the constructions. We just need two large sets of sufficiently random edges.

\subsection{Phase 1}
Let $G_i$ be the bipartite graph with bipartition $A,B$ that is induced by the edges $E_i,i=1,2,3$ and let $F_i=E(G_i)=\set{\{a_r,b_s\}:(r,s)\in E(G_i)}$. For $S\subseteq A$, we define $N_1(S)$ to the set of $G_1$-neighbours of $S$ in $B$
\begin{lemma}\label{pm}
Let $m=\tfrac n2(\log n+2\log\log n+c_n)$ satisfy $m/n\lesssim\log n$, where $A_n\lesssim B_n$ if $A_n\leq (1+o(1)B_n$ as $n\to\infty$. (This bounds $c_n$ and for larger $c_n$ we have minimum in- and out-degree at least one w.h.p.) Then
\beq{M1}{
\lim_{n\to\infty}\Pr(G_1\text{ has a perfect matching })=\begin{cases}0&c_n\to-\infty.\\e^{-e^{-c}/8}&c_n\to c.\\1&c_n\to\infty.\end{cases}
}
\end{lemma}
\begin{proof}
Let $\cM_1$ denote the event that $G_1$ contains two vertices of degree one in either $A$ or $B$ that share a common neighbour. Let $\cN_k,k\geq 3$ denote the event that $G_1$ contains a set $S\subseteq A:|S|=k$  such that $|N_1(S)|\leq k-1$. A minimal such $S$ satisfies (i) $|N_1(S)|=k-1$ and (ii) each $b\in N_1(S)$ has at least two neighbours in $S$, (we can reduce $S$ by one if $|N_1(S)|<k-1$ or if there is a $b\in N_1(S)$ with one neighbour in $S$) and (iii) $S\cup N_1(S)$ induces a connected subgraph. If $\cM_1$ does not occur then this reduction in the size of $S$ must stop before $k=1$ because of our minimum degree condition. Then, because $G_1$ has no isolated vertices,
\beq{M2}{
\Pr(\cM_1)\leq \Pr(G_1\text{ does not have a perfect matching})\leq \Pr(\cM_1)+2\sum_{k=3}^{n/2}\Pr(\cN_k).
}
As usual, we deal with $k>n/2$ by looking at the neighbourhood of sets $T\subseteq B$ of size $n-k+1$ and using symmetry.

We first deal with $\Pr(\cM_1)$.

{\bf Case 1: $c_n=-\om$ where $\om\to \infty$ and $m/n\to\infty$:}\\
Let an {\em A-cherry} be a path of length two with centre vertex in $A$ and endpoints of degree one in $B$. We define B-cherries by reversing the roles of A and B. We let $Z$ denote the number of A-cherries. Then in $D_\bx$, with $\cE',\cE''$ as in \eqref{Edash}, $\cD_d=\set{d_\bx^-(1)=d_\bx^-(2)=1,d_\bx^+(3)=d}$,
\begin{align}
\E(Z)&=n\binom{n}{2}\sum_{\cD_d\geq 2}\Pr(\cD_d\mid\cE',\cE'')\frac{m(m-1)d(d-1)}{m^2(m-1)^2}\nn\\
&=n\binom{n}{2}\sum_{d\geq 2}\frac{\Pr\brac{\cD_d,\sum_{i\geq 3}Z_i''=m-2,\sum_{i\geq 2}Z_i'=m-d}}{\Pr\brac{\sum_{i}Z_i''=m,\sum_{i}Z_i'=m}}\frac{d(d-1)}{m(m-1)}\nn\\
&=n\binom{n}{2}\sum_{d\geq 2}\frac{\Pr(\cD_d)\Pr\brac{\sum_{i\geq 3}Z_i''=m-2}\Pr\brac{\sum_{i\geq 2}Z_i'=m-d}}{\Pr\brac{\sum_{i}Z_i''=m}\Pr\brac{\sum_{i}Z_i'=m}}\frac{d(d-1)}{m(m-1)}\nn\\
&\sim n\binom{n}{2}\sum_{d\geq 2}\Pr(\cD_d)\cdot\frac{ d(d-1)}{m(m-1)}\label{EZ}\\
\noalign{\text{after removing  the probabilities of  sums of $Z_i',Z_i''$ using Proposition \ref{propx} in Appendix B}}
&\sim \frac{n^3}{2m^2}\brac{\sum_{d=2}^{\D_0}\frac{z^{d+2}}{(d-2)!(e^z-1)^3}+\Pr(d_\bx^+(3)\geq \D_0)}, \label{EZ}\\
&\sim \frac{n^3z^4}{2m^2e^{2z}},\quad\text{ since $\Pr(d_\bx^+(3)\geq \D_0)=(\log n)^{-\Omega(\log^2n)}$ and $e^z-1\sim e^z$ as $m/n\to\infty$},\nn\\
&\sim \frac{m^2}{2ne^{2m/n}},\quad\text{ since $m/n\to\infty$}\label{EZ1}\\
&\geq e^{\om}/8\to\infty.\label{EZ2}
\end{align}
It is worth observing that the above shows that conditioning on $\cE',\cE''$ only changes probabilities by a factor $1+o(1)$ and from now on we rely on this without mention.
 
{\bf Case 1a: $\om\leq \a\log n$ where $\a=2/7$:}\\
At this point we want to use the Chebyshev inequality to prove that $Z\neq 0$ w.h.p. Thus we write $\E(Z(Z-1))=A_1+A_2$, where $A_1$ counts disjoint cherries and $A_2$ counts pairs of A-cherries sharing an A-vertex (forming a $K_{1,3}$). In this case, $z\geq (5-o(1))\log n/14$, which is needed for \eqref{nn1} below.
\begin{align}
&A_1=\nn\\
&n(n-1)\binom{n}{2}^2\sum_{d_1,d_2\geq 2}\Pr(d_\bx^-(1)=d_\bx^-(2)=d_\bx^-(3)=d_\bx^-(4)=1,d_\bx^+(5)=d_1,d_\bx^+(6)=d_2\mid\cE',\cE'')\times\nn\\
&\hspace{3in}\frac{m(m-1)d_1(d_1-1)}{m^2(m-1)^2}\frac{(m-2)(m-3)d_2(d_2-1)}{(m-2)^2(m-3)^2}\nn\\
&\hspace{4.5in}\sim \E(Z)^2,\qquad\text{ using \eqref{EZ}}.\nn\\
&A_2=\nn\\
&n\binom{n}{3}\sum_{d\geq 3}\Pr(d_\bx^-(1)=d_\bx^-(2)=d_\bx^-(3)=1,d_\bx^+(4)=d\mid\cE',\cE'') \frac{m(m-1)(m-2)d(d-1)(d-2)}{m^2(m-1)^2(m-2)^2}\nn\\
&\sim \frac{n^4}{6m^3}\sum_{d\geq 3}\frac{z^{d+4}}{(d-3)!(e^z-1)^4}\sim \frac{m}{6e^{3z}}=O(n^{o(1)+(3\a-1)/2})=o(1).\label{nn1}
\end{align}
Thus $\E(Z(Z-1))\sim \E(Z)^2$ and so from \eqref{EZ2}, $Z\neq 0$ w.h.p.

{\bf Case 1b: $\om\geq \a\log n$ and $m\geq n$:}\\
Let $\wh \Omega_1$ be the set of sequences obtained by generating $Z_i',Z_i'',i\in[n]$ and then randomly permuting the multi-sets $\set{j\times Z_i'},\set{j\times Z_i''}$ and placing the obtained sequences in the odd and even positions respectively. If $\sum_{i=1}^nZ_i'>\sum_{i=1}^nZ_i''$ then use *'s to fill in any blanks and vice-versa. Arguing as for \eqref{EZ2} we see that $\E(Z)=\Omega(n^{2\a})$. Let $\wh Z$ be the number of A-cherries $Z$ computed w.r.t. $\wh\Omega_1$, when the $Z_i',Z_i''$ are replaced by $\wh Z_i'=\min\set{Z_i',\D_0},\wh Z_i''=\min\set{Z_i'',\D_0}$ respectively. now changing the value of a $\wh Z_i',\wh Z_i''$ can only change $\wh Z$ by at most $\D_0^2$. It follows from McDiarmid's martingale inequality that for any $t>0$,
\[
\Pr(|\wh Z-\E(\wh Z)|\geq t)\leq \exp\set{-\frac{2t^2}{2n\D_0^4}}.
\]
Putting $t=\frac12\E(Z)$ implies that $\Pr(Z=0)=O(e^{-n^{4\a-1}})$. We choose $\a=2/7$ to give $\Pr(Z=0)=O(e^{-n^{1/7}})$. We need to add in the probability that $\wh Z\neq Z$, but this can be bounded by $O(n(\log n)^{-\log^2n})$. This completes Case 1.

{\bf Case 2: $c_n\to c$:}\\
We first rule out A-cherries that share a vertex i.e. three vertices of degree one having a common neighbour. Let this event be $\cE_3$. We can because of Lemma \ref{far} consider this event in $D_\bx$ i.e. in the digraph before we do any switches. This is because vertices in SMALL do not participate in switches. Then
\begin{align}
\Pr(\cE_3)&\leq 2n\binom{n}{3}\Pr(N_1(\set{a_1,a_2,a_3})=\set{b_1}\mid \cE_1',\cE_1'')\nn\\
&\leq(1+o(1))\sum_{d=3}^{\D_0}n^4d^3m^{-3}\brac{1-\frac{3}{n}}^{m-3}+(\log n)^{-\Omega(\log^2n)}.\label{M3}
\end{align}
{\bf Explanation:} in the above expression, $d$ is the degree of vertex $b_1$ and Lemma \ref{LM} below shows that $d\leq \D_0$ with probability $1-(\log n)^{-\Omega(\log^2n)}$. The 2 accounts for the common neighbour being in $A$ or $B$. Having chosen $a_1,a_2,a_3,b_1$ in at most $n\binom{n}{3}$ ways, we choose the relevant edges in at most $m^3$ ways and then $m^3d^3m^{-6}$ approximates the probability that the edges are as claimed.

It follows from \eqref{M3} that $\Pr(\cE_3)=O(n^{-1/2+o(1)})=o(1)$.

We are going to apply the method of moments to estimate the distribution of $Z$. Fix $k \ge 1$. Then,
\[
\E((Z)_k)\sim  n^{3k}\sum_{t=0}^k\binom{k}{t}m^{2k}n^{-4k}\brac{1-\frac{2k}{n}}^{m-2k} \sim \bfrac{e^{-c}}{8}^k.
\]
It follows from the method of moments that $Z$ is asymptotically Poisson with mean $e^{-c}/8$.

{\bf Case 3: $c_n\to\infty$, $m\leq (1+o(1))n\log n$.:}\\
Going back to \eqref{EZ2}, with $c_n=\om$, we see that we see that $\E(Z)\leq e^{-\om}/8\to 0$.

We have dealt with $\cM_1$ and we now tackle the sum in \eqref{M2}. We can assume that $c_n\geq c$ for some constant $c$.

We will be more careful with our bound on maximum degree. A first moment calculation bounds $\max_id_\bx(i)$ by $5\log n$. Then, together with Lemma \ref{far}(b) this allows us to bound the maximum degree in \by\ by $\D_1=6\log n$. We remind the reader that $\ell_0=\log n/20\log\log n$.

{\bf Case 1: $3\leq k\leq \ell_0/2$.}
Suppose then that $S\subseteq A$ and $3\leq |S|\leq \ell_0/2$. It follows from Lemma \ref{far}(e) and the fact that $S\cup N_1(S)$ forms a connected set that $|S\cap (SMALL\cup ALTERED)|\leq 2$. By construction, if $v\notin SMALL$, then $d_1(v)\geq \log n/100$, where $d_i$ denotes degree in $G_i$. Thus $|N_1(S)|>\ell_0/2$.

{\bf Case 2: $\ell_0/2<k\leq k_0=\frac{m}{10\D_1}$.}
Suppose then that $S\subseteq A$ and $\ell_0/2<|S|\leq k_0$. Very crudely, it follows from Lemma \ref{far} and the fact that $S\cup N_1(S)$ forms a connected set that $|S\cap (SMALL\cup ALTERED)|\leq |S|/2$. Thus,
\[
\Pr(\exists S)\leq \sum_{k=\ell_0/2}^{k_0}\binom{n}{k}^2\bfrac{k\D_1}{m}^{k\log n/200}\leq \sum_{k=\ell_0/2}^{k_0}\brac{\bfrac{ke}{m}^{\log n/200-2}\cdot\D_1^{\log n/200}}^k=o(1).
\]
{\bf Case 3: $k_0<k\leq n/2$.}
Suppose then that $S\subseteq A,T\subseteq B$ and $k_0<|S|=|T|\leq n/2$ and $N_1(S)\subseteq T$. By bounding the probability that $S$ has no neighbours outside $T$, we have
\mults{
\Pr(\exists S)\leq \sum_{k=k_0}^{n/2}\binom{n}{k}^2\brac{1-\frac{(k-|SMALL\cup ALTERED|)\log n}{100m}}^{k\log n/200}\leq\\ \sum_{k=k_0}^{n/2}\brac{\bfrac{ne}{k}^2\cdot\exp\set{-\frac{k\log^2n}{20000m}}}^k=o(1),
}
since $k_0=\Omega(n)$ here.
\end{proof}

It is easy to show by symmetry that the perfect matching $M_1$ promised by the above can be taken to be a uniform random perfect matching. Indeed, given $D$ and a permutation $\p$ of  $[n]$ we define $\p D=([n],\set{(i,\p(j)):(i,j)\in E(D)}$. Let $\p$ be a uniform random permutation of $B$. Suppose that we check for a perfect matching in $D$ by looking for a perfect matching $M$ in $\p D$ and then taking $\p^{-1}M$ to be our perfect matching $M_1$ in $D$. Note that $\p^{-1}M$ is uniformly random, no matter what value $M$ takes. Finally note that $\p D$ has the same distribution as $D$ so that $\Pr(G_D\text{ has a perfect matching})=\Pr(G_{\p D}\text{ has a perfect matching})$.
\subsection{Phase 2}
We can from now on assume that $c_n$ does not tend to $-\infty$, using the notation of Lemma \ref{pm}. There is nothing more to prove if $c_n\to-\infty$.  We can also assume that $m\leq (1+o(1))n\log n$, for otherwise we can refer to \cite{F1}. So now let $m=an\log n$ where $a\in [1/2-o(1),1+o(1)]$, and that $M_1$ exists.

Let $\c{C}_1$ be the cycle cover in $D$ corresponding to $M_1$. $M_1$ being a uniform random perfect matching implies that w.h.p. $\c{C}_1$ has at most $2\log n$ cycles. In this phase we transform $\c{C}_1$ into a cycle cover in which each cycle has size at least $n/\log^{1/2}n$. 
We do this using the edges $E_2$. See Section \ref{2.4} for a description of how this is done.

Let $Y_1$ be the set of indices $i$ such that $i$ appears at most $\log n/100$ times in the sequence $y_1,y_2,\ldots,y_{n\log n/5}$.
\begin{lemma}\label{smallY1}
$|Y_1|\leq n^{0.999}$ w.h.p.
\end{lemma}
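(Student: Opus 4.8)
The plan is to pass from $\by$ to the underlying random $\bx\in\Omega_1$, bound $\E|Y_1|$, and apply Markov's inequality; throughout write $N=n\log n/5$, and recall that here $m\sim\tfrac12n\log n$, so (by \eqref{2}) $z\sim\tfrac12\log n$ and $e^z-1=n^{1/2+o(1)}$. A single switch changes the values in exactly two positions, and for both the $P$-switch and the $L$-switch these positions carry one occurrence each of the vertices the switch \emph{involves}; every other vertex keeps its occupied positions. By Lemma \ref{far}(b) no vertex is involved in four or more switches, so the number of times any fixed vertex $i$ occurs among $x_1,\dots,x_N$ changes by at most $3$ in passing from $\bx$ to $\by$ (and the final re-ordering only permutes edges lying entirely in positions $>N$, hence fixes the prefix). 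Therefore $Y_1\seq\wt Y:=\set{i\in[n]:i\text{ occurs}\le\tfrac{\log n}{99}\text{ times in }x_1,\dots,x_N}$, using $\tfrac{\log n}{100}+3\le\tfrac{\log n}{99}$ for large $n$. It thus suffices to prove $\E|\wt Y|\le n^{1-\e}$ for some fixed $\e>0$ (any $\e>1/100$ will do), for then $\Pr(|Y_1|\ge n^{0.999})\le n^{1-\e}/n^{0.999}=n^{0.001-\e}\to0$.

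By Lemma \ref{Om1}, together with the fact that the number of $\bx\in\Omega_1$ with prescribed out- and in-degree sequences factorises, these two sequences are independent for a uniform $\bx\in\Omega_1$, distributed as $(Z_i')_i\mid\cE'$ and $(Z_i'')_i\mid\cE''$; and conditional on the degrees, the odd (resp.\ even) entries of $\bx$ form a uniformly random arrangement of $\set{d_\bx^+(j)\times j:j\in[n]}$ (resp.\ of $\set{d_\bx^-(j)\times j:j\in[n]}$). Among positions $1,\dots,N$ there are $(\tfrac15+o(1))m$ of the $m$ odd positions and $(\tfrac15+o(1))m$ of the $m$ even positions, so, conditional on $d_\bx^+(1)=a$ and $d_\bx^-(1)=b$, the number of occurrences of vertex $1$ among $x_1,\dots,x_N$ is $H^++H^-$, where $H^+\sim\mathrm{Hyp}(m,(\tfrac15+o(1))m,a)$ and $H^-\sim\mathrm{Hyp}(m,(\tfrac15+o(1))m,b)$ are independent, with $\E H^+=(\tfrac15+o(1))a$ and $\E H^-=(\tfrac15+o(1))b$.

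To bound $\E|\wt Y|=n\,\Pr(1\in\wt Y)$, first remove the conditioning on $\cE',\cE''$ exactly as in the proof of Lemma \ref{far}(a) (using \eqref{ll1},\eqref{ll2}, and discarding, at cost $(\log n)^{-\Om(\log^2n)}$, the event that some degree exceeds $\D_0$):
$$
\Pr(1\in\wt Y)\le\ooi\sum_{a,b\ge1}\Pr(Z'=a)\Pr(Z''=b)\,\Pr\brac{H^++H^-\le\tfrac{\log n}{99}}+(\log n)^{-\Om(\log^2n)}.
$$
Split the sum according to whether $a+b<\tfrac{\log n}{6}$. If $a+b<\tfrac{\log n}{6}$ then, since $Z''\ge1$, the event forces $Z'<\tfrac{\log n}{6}$ and $Z''<\tfrac{\log n}{6}$, so by independence this part is at most $\Pr(Z'<\tfrac{\log n}{6})^2$; since $z\sim\tfrac12\log n$ is much larger than $\tfrac{\log n}{6}$, the probabilities $\tfrac{z^k}{k!(e^z-1)}$ increase over $k<\tfrac{\log n}{6}$, and a single-term bound (the $|SMALL|$ computation of Lemma \ref{far}(a) with a larger threshold) gives $\Pr(Z'<\tfrac{\log n}{6})\le n^{-\Om(1)}$. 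If instead $a+b\ge\tfrac{\log n}{6}$ then $\tfrac{\log n}{99}\le\tfrac12(\E H^++\E H^-)$, so $\set{H^++H^-\le\tfrac{\log n}{99}}\seq\set{H^+\le\tfrac12\E H^+}\cup\set{H^-\le\tfrac12\E H^-}$, and a Chernoff bound for the hypergeometric distribution gives $\Pr(H^++H^-\le\tfrac{\log n}{99})\le e^{-\E H^+/8}+e^{-\E H^-/8}\le e^{-ca}+e^{-cb}$ for a fixed $c>0$. Summing over all $a,b$ bounds this part by $2\,\E(e^{-cZ'})$, and since $\E(e^{-\l Z'})=\tfrac{e^{ze^{-\l}}-1}{e^z-1}\sim e^{-(1-e^{-\l})z}$ with $z\sim\tfrac12\log n$, we get $\E(e^{-cZ'})\le n^{-\Om(1)}$. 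Combining, $\Pr(1\in\wt Y)\le n^{-\e}$ for a fixed $\e>0$, so $\E|\wt Y|\le n^{1-\e}$, which completes the proof via the first paragraph.

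The one delicate point is the intermediate-degree range: vertices of total degree noticeably below the typical value $\sim\log n$, yet too large to be captured by Lemma \ref{far}(a)'s treatment of $SMALL$. For these the bound comes entirely from concentration of the hypergeometric prefix count, which is effective only because the prefix is a constant fraction of the whole sequence, so that a typical vertex has prefix-count mean $\sim\tfrac15\log n$, dominating the threshold $\tfrac{\log n}{100}$ by a fixed factor; once the split point (here $\tfrac{\log n}{6}$) is chosen to respect this, the remainder is the same truncated-Poisson bookkeeping already used in Lemma \ref{far}.
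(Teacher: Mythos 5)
Your proof is correct and follows essentially the same route as the paper's: condition on the truncated-Poisson degree of a vertex, note that its prefix count is then hypergeometric, split according to whether the degree is a sufficiently large multiple of the threshold, handle the large-degree range by Chernoff concentration (equivalently the exponential moment $\E(e^{-cZ'})$, which is the paper's $\exp\set{ze^{-1/11}}$ bound) and the small-degree range by a direct truncated-Poisson tail estimate, and finish with Markov. The only cosmetic differences are that you absorb the effect of the switches into a slightly relaxed threshold via Lemma \ref{far}(b) where the paper instead adds $|ALTERED|$ at the end, and you treat the in- and out-contributions jointly where the paper bounds $X_1'$ and $X_1''$ separately.
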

\begin{proof}
Let $W_i'$ be the number of times $i$ appears in $x_1,x_3,\ldots,x_{n\log n/5}$ where \bx\ is chosen randomly from $\wh\Omega_1$, where $\wh\Omega_1$ is defined in Case 1b, following \eqref{nn1} above. Let $X_1'=\set{i:W_i'\leq \log n/100}$. Then, with $\cE'$ as in \eqref{Edash} and where $m_1=n\log n/5$ and $L=\log n/100$,
\begin{align*}
\E(|X_1'|)&=n\sum_{k\geq 1}\Pr(Po(z)=k\mid \cE')\sum_{r=0}^L\frac{\binom{m_1}{r}\binom{m-m_1}{k-r}}{\binom{m}{k}}
\\
&\leq n\frac{\Pr(Po(z)\geq 10\log n)}{\Pr(\cE')}+n\sum_{k=1}^{10\log n}\Pr(Po(z)=k\mid \cE')\sum_{r=0}^L\frac{\binom{m_1}{r}\binom{m-m_1}{k-r}}{\binom{m}{k}}\\
&\leq o(n^{-1})+(1+o(1))n\sum_{k=1}^{10\log n}\Pr(Po(z)=k)\sum_{r=0}^L\frac{\binom{m_1}{r}\binom{m-m_1}{k-r}}{\binom{m}{k}}\\
&=o(n^{-1})+(1+o(1))n\sum_{k=1}^{10\log n}\Pr(Po(z)=k)\sum_{r=0}^L\binom{k}{r}\bfrac{m_1}{m}^r\brac{1-\frac{m_1}{m}}^{k-r}\\
&=o(n^{-1})+(1+o(1))n\sum_{k=1}^{10\log n}\frac{z^k}{(e^z-1)k!}\sum_{r=0}^L\binom{k}{r}\bfrac{2}{5}^r\bfrac{3}{5}^{k-r}\\
&=o(n^{-1})+n^{1/2+o(1)}\sum_{k=1}^{10\log n}\frac{z^k}{k!}\sum_{r=0}^L\binom{k}{r}\bfrac{2}{5}^r\bfrac{3}{5}^{k-r}.
\end{align*}
Now, $z\sim  a\log n,a\in [1/2-o(1),1+o(1)]$ (see \eqref{zrho}) and so
\beq{smallk}{
\sum_{k=1}^{a\log n/2}\frac{z^k}{k!}\leq 2\bfrac{2e^{1+o(1)}z}{a\log n}^{a\log n/2}\leq n^{9/10}.
}
Furthermore, if $k\geq a\log n/2$ then the Chernoff bounds imply that
\[
\sum_{r=0}^L\binom{k}{r}\bfrac{2}{5}^r\bfrac{3}{5}^{k-r}\leq \exp\set{-\frac12\bfrac{49}{50}^2\frac{2k}{5}}\leq e^{-k/11}.
\]
Therefore
\beq{largek}{
\sum_{k=a\log n/2}^{10\log n}\frac{z^k}{k!}\sum_{r=0}^L\binom{k}{r}\bfrac{2}{5}^r\bfrac{3}{5}^{k-r}\leq \sum_{k\geq a\log n/2}\frac{z^k}{k!}e^{-k/11}\leq \exp\set{ze^{-1/11}}\leq n^{0.46}.
}
The Markov inequality and \eqref{smallk} and \eqref{largek} imply that $|X_1'|\leq n^{0.99}$ w.h.p.

To finish the proof we use $|Y_1|\leq|X_1|+|ALTERED|$, where $X_1=X_1'\cup X_1''$ and $X_2''$ is defined w.r.t. even indices. We note that $|ALTERED|=O(\log^4n)$ which follows from Lemma \ref{LM}(b)(g) below.
\end{proof}
It follows that w.h.p.
\[
|E_2\cup E_3|\geq (1+o(1))n\log n/2-n\log n/5-n^{0.999}\log n/100\geq n\log n/3.
\]
Furthermore, if we fix the degrees of the digraph induced by $E_2\cup E_3$ then the actual edges are randomly distributed. We can construct $E_2\cup E_3$ by only checking that vertices $x_{2j-1},x_{2j},j\leq n\log n/5$ have appeared $\log n/100$ times before, without actually identifying them.

Let  $Y_2(k)$ be the set of indices $i\notin Y_1$ such that $i$ occurs at most $k+\log n/100$ times as an odd index in the sequence $y_{n\log n/5+1},\ldots,y_{n\log n/3}$ or at most $k+\log n/100$ times as an even index. Note that we have ordered $E_2,E_3$ so that $E_3$ follows $E_2$ after index $n\log n/5$.
\begin{lemma}\label{lcore1}
Let $L=\log\log n$. Then

(a) $|Y_2(L)|\leq n^{15/16}$ w.h.p.

(b) W.h.p. every connected set of vertices in $G_i,i=2,3$ of size at most $L$ contains at most 20 members of $Y_2(L)$.

(c) No member of $Y_2(L)$ lies on a cycle in $G_\by$ of length at most $L$. (Here $G_\by$ is the graph induced by the sequence \by.)
\end{lemma}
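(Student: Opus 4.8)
The plan is to prove all three parts by first-moment/Markov estimates in the style of Lemmas~\ref{far} and~\ref{smallY1}. As there, I would pass to the Poissonised model of Lemma~\ref{Om1} and work with the pre-switch sequence $\bx$; by Claim~\ref{cl1} and Lemma~\ref{LM} the switched sequence $\by$ differs from it in only $O(\log^4 n)$ edges and vertices, so (exactly as in the proof of Lemma~\ref{far}) passing between the two costs only an additive $o(1)$. The decisive simplification is that the relevant length scale in (b), (c) is $L=\log\log n$, so every overcounting factor of the form $(\log n)^{O(L)}$ or $k^{O(k)}$ with $k\le L$ is $n^{o(1)}$; thus (b), (c) will follow from (a) with room to spare.

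\textbf{Part (a).} I would set $X_2(L)$ to be the analogue of $Y_2(L)$ in the Poissonised model (replace $Y_1$ by the set $X_1$ of Lemma~\ref{smallY1}), so that $|Y_2(L)|\le|X_2(L)|+|ALTERED|$ with $|ALTERED|=O(\log^4 n)$, and then bound $\E(|X_2(L)|)$ by fixing a vertex $i$ and conditioning on its out- and in-degrees $d^+,d^-$, which by the estimates taking \eqref{<<} to \eqref{<<<} are, up to $1+o(1)$, distributed as independent truncated Poissons with parameter $z\sim\tfrac12\log n$. The point is to combine two constraints on $i$. First, $i\notin X_1$ forces $i$ to occur more than $\log n/100$ times in a block of $\Theta(n\log n)$ positions, which (Chernoff) forces $d^++d^-\gtrsim\log n/20$. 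Second, $i$ occurring at most $L+\log n/100$ times in the $E_2$-window, say among odd positions, is the event that a $\mathrm{Bin}(d^+,p)$ variable is $\le L+\log n/100$, where $p$ is the (constant, positive) density of the $E_2$-window among odd positions: when $d^+$ is of order $z$ or larger this is a large deviation below the mean $pd^+$, of probability $n^{-\Omega(1)}$, and when $d^+$ is smaller $\Pr(d^+_\bx(i)=d^+)$ is itself $n^{-\Omega(1)}$, while the lower bound then forced on $d^-$ is harmless. Tracking exponents as in \eqref{smallk}--\eqref{largek}, the worst $d^+$ still gives $\Pr(i\in X_2(L))\le n^{-1/16-\e}$ for a constant $\e>0$, whence $\E(|X_2(L)|)\le n^{15/16-\e}$ and Markov's inequality completes (a).

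\textbf{Parts (b) and (c).} For (c) I would bound the expected number of cycles of length $k\le L$ in $G_\by$ through a fixed vertex $v$ with $v\in Y_2(L)$: this is at most $n^{k+o(1)}$ times the probability that a prescribed set of $k$ edges is present and $v\in Y_2(L)$. Conditioning on the degree sequence — whose maximum is $\le\D_1=6\log n$ by the first-moment bound in the proof of Lemma~\ref{pm} together with Lemma~\ref{far}(b) — the probability of the $k$ edges is $\le\prod_{\text{cycle vertices}}(\deg)^{O(1)}/m^k$; summing over the degrees of the cycle vertices other than $v$ (using $\E[\deg^{O(1)}]=(\log n)^{O(1)}$) and over the degree of $v$ (using $\deg(v)^{O(1)}=n^{o(1)}$ and $\Pr(v\in Y_2(L))=\E(|Y_2(L)|)/n\le n^{-1/16}$, which the proof of (a) provides) leaves a contribution $\le n^{-1/16+o(1)}(O(\log n))^k$ for each $k$, and as $k\le L$ this sums to $o(1)$. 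Part (b) is the same computation with a spanning tree of the connected set in place of the cycle: choosing $k\le L$ vertices, a tree shape, and its $k-1$ present edges costs $\binom{2n}{k}k^{k-2}(O(\log n/n))^{k-1}=n^{1+o(1)}$, while requiring $21$ distinguished vertices of the set to lie in $Y_2(L)$ costs a further $n^{-21/16+o(1)}$ (the events, depending on disjoint stub-sets of distinct vertices, decouple up to $1+o(1)$ factors given the degrees), so the expected number of bad sets is $n^{1+o(1)}\cdot n^{-21/16}=o(1)$. The constant $21$ is not optimal — any value exceeding $16$ works — so ``at most $20$'' follows.

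\textbf{Main obstacle.} The whole content lies in (a), and within it in genuinely exploiting $i\notin Y_1$: without that hypothesis $Y_2(L)$ would contain all $\Omega(n)$ vertices of small out-degree and the bound $n^{15/16}$ would be false. So the hard step is to quantify that the vertices which survive the $Y_1$-filter yet still land $\le\log n/100$ times in the $E_2$-window are few — the two-constraint large deviation above — and this is the place where the size of the $E_2$-window matters: one needs its density $p$ among the relevant positions bounded below by enough that $\Pr(\mathrm{Bin}(z,p)\le\log n/100)=n^{-1/16-\Omega(1)}$, which holds because $E_2$ forms a fixed positive fraction of the edge sequence. Everything past (a) is bookkeeping, with the tiny scale $L=\log\log n$ doing the heavy lifting.
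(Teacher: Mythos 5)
Your overall strategy --- first moments in the Poissonised model, Markov for (a), tree/cycle union bounds with configuration-model edge probabilities for (b) and (c), and the observation that $L=\log\log n$ makes every combinatorial overcount $n^{o(1)}$ --- is the same as the paper's, and your parts (b) and (c) match the paper's computations almost exactly. (Your reduction in (b), requiring only $21$ distinguished vertices of the connected set to lie in $Y_2(L)$ rather than all of them, is in fact the cleaner and logically correct form of the reduction, at the cost of needing the sharper per-vertex probability $n^{-1/16}$ from (a) instead of the paper's direct per-vertex bound of roughly $n^{-1/8}$.) The substantive divergence is in (a), where the paper is much simpler than you anticipate: it never uses the hypothesis $i\notin Y_1$. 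Since the degrees are truncated Poisson with parameter $z\sim\tfrac12\log n$, the occurrence count of $i$ in the window is, after mixing over the degree, essentially $Bin(2n\log n/15,1/n)$, whose mean $\Theta(\log n)$ exceeds $L+\log n/100$ by a large constant factor; the single unconditional lower-tail bound $\Pr\brac{Bin(2n\log n/15,1/n)<L+\log n/100}\leq n^{-1/15+o(1)}$ already gives $\E(|Y_2(L)|)\leq n^{14/15}$.

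This exposes the one genuine error in your write-up: the claim in your ``Main obstacle'' paragraph that without the $Y_1$-filter ``$Y_2(L)$ would contain all $\Omega(n)$ vertices of small out-degree'' is false. With $m\sim\tfrac12 n\log n$ the number of vertices of out-degree at most $\log n/100$ is $n\sum_{k\leq \log n/100}z^k/(k!(e^z-1))=n^{0.55+o(1)}$, far below $n^{15/16}$, so there is no regime in which the unconditional bound fails. Your two-constraint argument is not thereby wrong --- conditioning on $i\notin Y_1$ only shrinks the event --- and the worst case over $d^+$ of $\Pr(Z=d^+)\cdot\Pr(\text{window count}\leq \log n/100\mid d^+)$ is indeed roughly $n^{-0.1}$, comfortably below your asserted $n^{-1/16-\e}$. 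But that optimisation over $d^+$ is precisely the step you assert rather than carry out, and it is the only quantitative input that your (b) and (c) rely on (via $21/16>1$). In short: the proof goes through, but the obstacle you identify as the heart of the lemma is a mirage, and the step you wave through is the one that actually requires the computation.
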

\begin{proof}
(a)  We have
\mults{
\E(|Y_2(L)|)\leq 2n\Pr(Bin(|E_1\cup E_2|,1/n)<L+\log n/100 )\\
\leq 2n\binom{2n\log n/15}{L+\log n/100}n^{-L-\log n/100}\brac{1-\frac{1}{n}}^{2n\log n/15-L-\log n/100}\leq n^{14/15}.
}
So the Markov inequality implies that $|Y_2(L)|\leq n^{15/16}$ w.h.p.

(b) The probability that there exists such a set can be bounded by
\mults{
2\sum_{\ell=20}^{L}\binom{n}{\ell}\binom{2n\log n/15}{\ell-1}\ell^{\ell-2}2^\ell(\ell-1)! n^{-2\ell+2}\times\\
\brac{\sum_{i=0}^{L+\log n/100}\binom{2n\log n/15}{i}n^{-i}\brac{1-\frac{1}{n}}^{2n\log n/15-i}}^\ell
\leq 2\sum_{\ell=20}^Ln^{2+o(1)}\times n^{-\ell/8}=o(1).
}
{\bf Explanation:} $\binom{n}{\ell}$ counts the number of choices for the vertices in our connected set $S$ of size $\ell$. $\binom{2n\log n/15}{\ell-1}$ counts the number of choices for the positions of the $\ell-1$ edges of a tree $T$ contained in $S$ in the ordering of $E_i$. There are $\ell^{\ell-2}$ choices for $T$, $2^\ell(\ell-1)!$ choices for the ordering of the edges in the sequence \by\ and probability $n^{-2\ell+2}$ that the edges are the ones defining $T$. The large bracketed term bounds the probability that the $\ell$ chosen vertices are in $Y_2(L)$.

(c) The probability that we can find such a cycle can be bounded by
\mults{
\sum_{\ell=3}^L\ell\ell!\binom{n}{\ell}\binom{m}{\ell}\bfrac{\D_1}{m}^{2\ell}\times\\
\brac{\Pr([\ell]\cap ALTERED\neq \emptyset)+\sum_{i=1}^{L+\log n/100}\Pr\brac{d_\bx(1)=i\ \bigg|\  \sum_{i=1}^nZ_i'=\sum_{i=1}^nZ_i''}}=o(1).
}
{\bf Explanation:} we choose the vertices of the cycle $C$ in $\binom{n}{\ell}$ ways and choose a vertex of $C$ in $\ell$ ways and an ordering of the vertices of $C$ in $\ell!$ ways. Then we choose the places of the edges of the cycle in \bx\ in $\binom{m}{\ell}$ ways. Then $\bfrac{\D_1}{m}^\ell$ bounds the probability that these places in the sequence contain the claimed edges. recall that $\D_1=6\log n$ is a bound on the maximum degree in \bx. If $C\cap ALTERED=\emptyset$ then we can replace \bx\ by \by.
\end{proof}
We now show that the $\log n/100$ cores $C_2,C_3$ of $G_2,G_3$ are large.
\begin{lemma}\label{ars}
$C_2,C_3\supseteq ([n]\setminus Y_2(L))$ w.h.p
\end{lemma}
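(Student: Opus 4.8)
The plan is to run the $(\log n/100)$-core peeling process on $G_2$ (and, identically, on $G_3$) and to show \emph{deterministically}, using the conclusions of Lemmas \ref{smallY1} and \ref{lcore1}, that no vertex outside $Y_2(L)$ is ever deleted. Recall that the $(\log n/100)$-core $C_i$ of $G_i$ is obtained by repeatedly deleting a vertex whose current degree is below $\log n/100$ until none remains, and that the resulting set is independent of the deletion order; also $G_2,G_3$ are simple since $\by\in\Okl{0,0}$.

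First I would make precise the degree bound that the definition of $Y_2(L)$ is designed to give: with the edges ordered $E_1$, then $E_2$, then $E_3$, every $v\in[n]\setminus Y_2(L)$ has both $a_v$ and $b_v$ of degree strictly more than $L+\log n/100$ in $G_2$, and likewise in $G_3$. For $v\notin Y_1$ this is exactly the complement of the event defining $Y_2(L)$; for $v\in Y_1$ one checks from the construction of $E_1$ (it absorbs at most $\log n/100$ further incident edges at such a vertex) together with $|Y_1|\le n^{0.999}$ (Lemma \ref{smallY1}) that $v$'s remaining incident edges are spread over $E_2,E_3$ so that $G_2$- and $G_3$-degree also exceed $L+\log n/100$.

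Next, suppose for contradiction that peeling $G_2$ deletes some vertex of $[n]\setminus Y_2(L)$. Fix a deletion order realising $C_2$ and let $v$ be the first vertex not in $Y_2(L)$ to be deleted. When $v$ is deleted it has fewer than $\log n/100$ surviving neighbours, hence more than $(L+\log n/100)-\log n/100=L$ of its neighbours were already deleted; these are $>L$ distinct vertices since $G_2$ is simple, and by minimality of $v$ all of them lie in $Y_2(L)$. Then $v$ together with $21$ of these deleted neighbours is a connected subgraph of $G_2$ (a star centred at $v$) on $22\le L$ vertices containing at least $21>20$ members of $Y_2(L)$, contradicting Lemma \ref{lcore1}(b). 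Hence $C_2\supseteq[n]\setminus Y_2(L)$, and the identical argument on $G_3$ gives $C_3\supseteq[n]\setminus Y_2(L)$.

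The core-peeling half is entirely deterministic and routine once Lemma \ref{lcore1}(b) is available; Lemma \ref{lcore1}(c) and the connectivity refinement in (b) are not needed here (they will be used later when threading the cores into cycles). I expect the only real, if modest, obstacle to be the bookkeeping in the first step: confirming that the definition of $Y_2(L)$ together with the order in which $E_1,E_2,E_3$ were built does force $G_2$- and $G_3$-degree above $L+\log n/100$ at every vertex outside $Y_2(L)$.
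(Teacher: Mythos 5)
Your core argument is the same as the paper's: run the $(\log n/100)$-core peeling and use Lemma \ref{lcore1}(b) to show no vertex of initial degree $>L+\log n/100$ can ever fall below threshold. The paper organizes this as a round-by-round induction ($S_i$ = vertices stripped in round $i$, each vertex loses at most $20$ neighbours per round, and a surviving process for $21$ rounds would produce a path of $21$ vertices of $Y_2(L)$, contradicting (b)); your ``first deleted vertex outside $Y_2(L)$ together with $21$ already-deleted neighbours forms a star violating (b)'' is a cleaner packaging of the same idea and is correct, granted the degree lower bound at vertices outside $Y_2(L)$.

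The genuine error is in your first paragraph, in the case $v\in Y_1$. You claim such a $v$ still has $G_2$- and $G_3$-degree exceeding $L+\log n/100$ because $E_1$ ``absorbs at most $\log n/100$ further incident edges'' at $v$. The construction does the opposite: for every $j>j_1$, the edge $(y_{2j-1},y_{2j})$ is added to $E_1$ whenever its tail (resp.\ head) has appeared fewer than $\log n/100$ times in the initial block. Hence a vertex that appears fewer than $\log n/100$ times there has \emph{all} of its subsequent out-edges (or in-edges) swept into $E_1$, and may well have degree $0$ in $E_2\cup E_3$; so $Y_1\not\subseteq C_2$ in general and that half of your reduction fails. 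This is really a defect of the lemma's statement rather than of the peeling argument: since $Y_2(L)$ is by definition a subset of $[n]\setminus Y_1$, the literal statement would force $Y_1\subseteq C_2\cap C_3$, which is not what is proved or needed. The intended (and provable) statement is $C_2,C_3\supseteq [n]\setminus(Y_1\cup Y_2(L))$; the paper's own proof tacitly assumes this by taking $S_0=Y_2(0)$, and the subsequent use of the lemma (a set $K_i$ of $n-n^{0.999}$ vertices, absorbing both $|Y_1|\le n^{0.999}$ and $|Y_2(L)|\le n^{15/16}$) is consistent only with that reading. With that correction, and noting that the degree bound in $G_3$ requires the analogous count over the $E_3$ block (the paper is silent on this, but it follows by symmetry), your argument goes through.
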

\begin{proof}
Let $S_i,i\geq 0$ denote the set of vertices stripped off in the $i$th round of the core finding process. Here $S_0=Y_2(0)$ and we argue by induction that $v\in S_i$ implies that $v\in Y_2(L)$ and that $v$ has at most 20 neighbours in $S_{i-1}$. This is true for $i=1$ by Lemma \ref{lcore1}(b). This is also true for $i>1$ for the same reason, given the inductive assumption. But then if $S(21)\neq \emptyset$, there is a path of length 21 consisting of vertices in $Y_2(L)$. So, $S(i)\subseteq Y_2(L)$ for $i\geq 0$.
\end{proof}
Thus w.h.p. $E_i,i=2,3$ contains a set $K_i$ of $n-n^{0.999}$ vertices that induces a subgraph with minimum degree at least $\log n/100$. We now show how to use $E(K_2)$ to increase the minimum cycle size of our perfect matching to at least $n_0=n/\log^{1/2}n$. Note that $n^{0.999}=o(n_0)$. Thus, the vertices not in $K_i$ can only represent a small part of each large cycle. 

\subsection{Elimination of Small Cycles}\label{2.4}
We partition the cycles $\c{C}_1$ associated with $M_1$ into {\em small} cycles $C,|C| < n_0$ and {\em large} $|C| \geq n_0$ respectively. We define a {\em Near Permutation Digraph} (NPD) to be a digraph obtained from a {\em Permutation Digraph} (PD) (cycle cover) by removing one edge. Thus an NPD $\Gamma$ consists of a path $P(\Gamma)$ plus a permutation digraph $PD(\Gamma)$ which covers $[n]\setminus V(P(\Gamma))$.

In a random permutation the expected number of vertices on cycles of length at most $s$ is precisely $s$ (\cite{K}). Thus, by the Markov inequality, w.h.p. $\Gamma_0$ contains at most $n\log\log n/\log^{1/2}n$ vertices on small cycles. Condition on this event and refer to it as $\c{M}$.

We now give an informal description of a process which removes a small cycle $C$ from a {\em current} PD $\Pi$. We start by choosing an (arbitrary) edge $(v_0,u_0)$ of $C$ and then delete it to obtain an NPD $\Gamma_0$ with $P_0=P(\Gamma_0)\in {\cal P}(u_0,v_0)$, where ${\cal P}(x,y)$ denotes the set of paths from $x$ to $y$ in $D$. The aim of the process is to produce a {\em large} set $S$ of NPD's such that for each $\Gamma\in S$, (i) $P(\Gamma)$ has at least $n_0$ edges and (ii) the small cycles of $PD(\Gamma)$ are a subset of the small cycles of $\Pi$. We will show that w.h.p. the endpoints of one of the $P(\Gamma)$'s can be joined by an edge to create a permutation digraph with (at least) one less small cycle.

The Out-Phase consists of a sequence of {\em basic steps.} In a  basic step of the {\em Out-Phase} we have an NPD $\Gamma$ where $P(\Gamma)$ is a path from $u_0$ to another vertex $v$. We examine the edges of the core $C_2$ leaving $v$ i.e. the edges going {\em out} from the end of the path. Let $w$ be the terminal vertex of such an edge and assume that $\Gamma$ contains an edge $(x,w)$. Then $\Gamma'=\Gamma\cup\{(v,w)\}\setminus\{(x,w)\}$ is also an NPD. $\Gamma'$ is acceptable if (i) $P(\Gamma')$ contains at least $n_0$ edges and (ii) any new cycle created (i.e. in $\Gamma'$ and not $\Gamma$) also has at least $n_0$ edges.

If $\Gamma$ contains no edge $(x,w)$ then $w=u_0$. We accept the edge if $P$ has at least $n_0$ edges. This would (prematurely) end an iteration, although it is unlikely to occur.

We do not want to look at very many edges of $E(K_2)$ in this construction and we build a tree $T_0$ of NPD's in a natural breadth-first fashion where each non-leaf vertex $\Gamma$ gives rise to NPD children $\Gamma'$ as described above. The construction of $T_0$ ends when we first have $\nu=n^{1/2}\log n$ leaves. The construction of $T_0$ constitutes an Out-Phase of our procedure to eliminate small cycles. Having constructed $T_0$ we need to do a further {\em In-Phase}, which is similar to a set of Out-Phases.

Then w.h.p. we close at least one of the paths $P(\Gamma)$ to a cycle of length at least $n_0$. If $|C|\geq 4$ and this process fails then we try again with a different independent edge of $C$ in place of $(u_0,v_0)$.

We now increase the formality of our description. We start Phase 2 with a PD $\Pi_0$, say, and a general iteration of Phase 2 starts with a PD $\Pi$ whose small cycles are a subset of those in $\Pi_0$. Iterations continue until there are no more small cycles. At the start of an iteration we choose some small cycle $C$ of $\Pi$. There then follows an Out-Phase in which we construct a tree $T_0=T_0(\Pi,C)$ of NPD's as follows: the root of $T_0$ is $\Gamma_0$ which is obtained by deleting an edge $(v_0,u_0)$ of $C$.

We grow $T_0$ to a depth at most $i_0=\rdup{1.5\log n}$. The set of nodes at depth $t$ is denoted by $S_t$.
\\
Let $\Gamma \in S_t$ and $P=P(\Gamma)\in {\cal P}(u_0,v)$. The {\em potential}
children $\Gamma'$ of $\Gamma$, at depth $t+1$ are defined as follows.

Let $w$ be the terminal vertex of an edge directed from $v$ in $E(K_2)$.
\\
{\bf Case 1.} $w$ is a vertex of a cycle $C' \in PD(\Gamma)$ with edge $(x,w) \in C'$.\\
Let $\Gamma'=\Gamma\cup\{(v,w)\}\setminus \{(x,w)\}$, thus extending the path $P$.
\\
{\bf Case 2}. $w$ is a vertex of $P(\Gamma)$.\\
 Either $w = u_0$, or $(x,w)$ is an edge of $P$. In the former case $\Gamma\cup\{(v,w)\}$ is a PD $\Pi'$ and in the latter case we let $\Gamma'=\Gamma\cup\{(v,w)\}\setminus \{(x,w)\}$, making a cycle and a shorter path.

In fact we only admit to $S_{t+1}$ those $\Gamma'$ which satisfy the following conditions {\bf C}(i) and {\bf C}(ii).

{\bf C}(i) The new cycle formed (Case 2 only) must have
at least $n_0$ vertices, and the path formed (both cases) must either
be empty or have  at least $n_0=n/\log^{1/2}n$ vertices. When the path formed is empty we close the iteration and if necessary start the next with $\Pi'$.

We denote the the $E_i$-out-neighbors of $v\in K_2$ by $out(v)$. We define a set $W$ of {\em used} vertices. Initially all vertices in $K_2$ are {\em unused} i.e. $W=\emptyset$. Whenever we examine an
edge $(v,w)$, we add both $v$ and $w$ to $W$. So if $v\not\in W$ then it has at least $\log n/100$ random choices $out(v)$ of neighbours to make in $K_2$. Furthermore these choices are almost uniform in that no vertex has a more than $600/n$ chance of being chosen. This is because the in-degrees in $C_2$ vary between $\log n/100$ and $\D_1=6\log n$. We do not allow $|W|$ to exceed $n^{3/4}$.

{\bf C} (ii) $x,w \not \in W$ .

An edge $(v,w)$ which satisfies the above conditions is described as {\em acceptable}.

\begin{lemma}\label{lem9}
Let $C$ be small, i.e. $|C|\leq n_0=n/\log^{1/2}n$. Let $t_0\sim \frac{\log n}{2\log\log n}$ be the smallest integer such that $(\log n/100)^t\geq \n=n^{1/2}\log n$ then
$$\Pr(\exists t\leq t_0\text{ such that }|S_{t}|  \in[\nu,\D_1\nu] ) =1-o(n^{-2}).$$
\end{lemma}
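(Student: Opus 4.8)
The plan is to show that the tree $T_0$ of NPDs grows roughly like a branching process with offspring mean $\log n/100$, so that after $t$ levels the number of leaves $|S_t|$ is about $(\log n/100)^t$, and this quantity jumps from below $\nu=n^{1/2}\log n$ to above it in a single step (since one step multiplies the size by at most $\D_1=6\log n$). Thus if at every level $t<t_0$ we have $|S_t|<\nu$, then we must argue $|S_t|$ does not die out prematurely and does not fail to reach the window $[\nu,\D_1\nu]$ by level $t_0$; the only way to fail is to have the process collapse. The target failure probability $o(n^{-2})$ is dictated by the fact that we later union-bound over $O(\log n)$ small cycles and a few independent restarts, so a polynomially small bound is needed, which is why we track the branching down to $\nu=n^{1/2}\log n$ leaves rather than stopping earlier.

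**First** I would set up the level-by-level recursion. Fix a level $t$ with $|S_t|<\nu$. Each $\Gamma\in S_t$ has a path endpoint $v=v(\Gamma)$; if $v\notin W$ it has at least $\log n/100$ out-edges in the core $C_2$, each landing at an almost-uniform vertex $w$ (probability $\le 600/n$ of hitting any fixed vertex, by the degree bounds in $C_2$). An edge is \emph{acceptable} unless (C(i)) it creates a short cycle or a short path, or (C(ii)) $x$ or $w$ already lies in $W$. Since $|W|\le n^{3/4}$ throughout and $|C|\le n_0$, the number of ``bad'' landing vertices for any given $\Gamma$ — those within distance roughly $n_0$ along the current permutation digraph of either end of the path, or in $W$ — is $O(n_0+n^{3/4})=O(n/\log^{1/2}n)$, and multiplying by the $600/n$ per-vertex probability shows that \emph{in expectation} only an $O(\log^{-1/2}n)=o(1)$ fraction of the $\ge\log n/100$ out-edges of $v$ are unacceptable. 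Hence conditionally each $\Gamma$ produces at least, say, $\log n/200$ acceptable children with very high conditional probability; the children of distinct $\Gamma\in S_t$ use disjoint fresh randomness (the out-choices of their endpoints) provided those endpoints are unused, which holds because we never let $|W|$ exceed $n^{3/4}$ while $|S_t|<\nu$ means $|W|=O(\nu\,\D_1)=n^{1/2+o(1)}\ll n^{3/4}$.

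**Next** I would convert this into a concentration statement. At level $t$ (with $|S_t|<\nu$), the quantity $|S_{t+1}|$ stochastically dominates a sum of $|S_t|$ independent (or negatively associated) random variables each at least $1$ and with mean $\ge\log n/200$; more simply, I would bound $\Pr(|S_{t+1}|<\tfrac12|S_t|\cdot\log n/200)$ by a Chernoff/union bound of the form $|S_t|\cdot\exp\{-\Omega(\log n)\}=n^{O(1)}\cdot n^{-\omega(1)}=o(n^{-3})$, using that each $\Gamma$ independently fails to yield $\ge\log n/400$ acceptable children only with probability $e^{-\Omega(\log n)}$ (a binomial-tail estimate on the $\ge\log n/100$ out-edges, each ``good'' with probability $1-o(1)$). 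Taking a union over the $t_0=O(\log n/\log\log n)$ levels gives that, with probability $1-o(n^{-2})$, as long as $|S_t|<\nu$ we have $|S_{t+1}|\ge (\log n/400)\,|S_t|$. Since $|S_0|=1$ and $t_0$ is chosen (as in the statement) to be the least integer with $(\log n/100)^{t_0}\ge\nu$, after at most $t_0$ doublings-and-then-some the size exceeds $\nu$; and since a single level multiplies the size by at most $\D_1$ (each $\Gamma$ has $\le\D_1$ out-edges), the first level $t$ at which $|S_t|\ge\nu$ satisfies $|S_t|\le\D_1\nu$, i.e. $|S_t|\in[\nu,\D_1\nu]$. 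One must also check the process does not terminate early via the Case-2/``$w=u_0$, path $\ge n_0$'' branch that closes the iteration — but that event, per node, has probability $O(n_0/n)=O(\log^{-1/2}n)$ and does not affect the growth lower bound since it is already counted among the $o(1)$ fraction of non-extending edges (and if it ever does close an iteration prematurely that is a \emph{success} for the overall small-cycle elimination, so we may simply include it in the good event).

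**The main obstacle** is maintaining genuine independence (or at least a clean stochastic lower bound) as the tree grows: the randomness revealed is the sequence of out-neighbour choices in $C_2$, and to keep distinct branches independent we need their path-endpoints to be distinct and unused, which is exactly what the $|W|\le n^{3/4}$ cap and the $|S_t|<\nu$ assumption guarantee — but one has to be careful that the ``forbidden'' set for a child (vertices near the two path ends along $PD(\Gamma)$, to avoid creating short cycles or short residual paths) is correctly bounded by $O(n_0)$ uniformly, including after many edge swaps have reshaped $PD(\Gamma)$. I would handle this by observing that at every node the digraph $\Gamma$ is still an NPD covering all of $[n]$, so ``short cycle/short path'' forbidden sets are determined by at most $2n_0+O(1)$ vertices, and ``used'' adds at most $n^{3/4}$ more; the per-edge bad probability is then $(2n_0+n^{3/4}+O(1))\cdot 600/n=o(1)$, uniformly. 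Everything else is a routine Chernoff-plus-union-bound argument, with the numerical slack ($\log n/100$ versus $\log n/400$, $\nu=n^{1/2}\log n$, $t_0\sim\log n/2\log\log n$) chosen comfortably so the error is $o(n^{-2})$.
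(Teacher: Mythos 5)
Your proposal is correct and follows essentially the same route as the paper: stochastic domination of each node's acceptable out-degree by a binomial with success probability $1-o(1)$ (bad landings being those near a path end, in $W$, or on a small cycle), a Chernoff tail bound per level, a union bound over the $t_0=O(\log n/\log\log n)$ levels, and the observation that one level inflates $|S_t|$ by at most $\D_1$ so the first crossing of $\nu$ lands in $[\nu,\D_1\nu]$. The only bookkeeping difference is that the paper's forbidden set also includes all vertices on small cycles of the current PD, which by the conditioning on $\c{M}$ contributes $O(\log\log n/\log^{1/2}n)$ rather than your $O(\log^{-1/2}n)$ to the per-edge failure probability; this does not affect the conclusion.
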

{\em Proof.} We assume we stop an iteration, in mid-phase if necessary, when $|S_t| \in[\nu,3\nu]$. Let us consider a generic construction
in the growth of $T_0$. Thus suppose we are extending from $\Gamma$ and $P(\Gamma)\in{\cal P}(u_0,v)$.

We consider $S_{t+1}$ to be constructed in the following manner: we first examine $out(v), v\in S_t$ in the order that these vertices were placed in $S_t$ to see if they produce acceptable edges. We then add in those vertices $x\not\in W$ which arise from $(x,w)$ with $v=in(w)\in S_t,w\not\in W$. (Here $in(w)$ is defined analogously to $out(v)$.)

Let $Z(v)$ be the number of acceptable vertices in $out(v)$. If $w\in out(v)$ is unacceptable then either (i) $w$ lies on $P(\Gamma)$ and is too close to an endpoint; this has probability bounded above by $600/\log^{1/2}n$, or (ii) the corresponding vertex $x$ is in $W$; this has probability bounded above by $600n^{-1/4}$, or (iii) $w$ lies on a small cycle. Thus $Z(v)$ stochastically dominates $B(\log n/100,p)$ where $p= 1-\frac{600\log\log n}{\log^{1/2}n}$, regardless of the history of the process. Here we use the fact that we have conditioned on $\c{M}$ at the start.

Now $S_0=\set{v_0}$ and suppose that $S_t=\set{v_0,v_1,\ldots,v_k}$ and that we expose $out(v_i)$ in the order $1,2,\ldots,k$ and {\em update} $W$ as we go. Then as long as $|W|\leq n^{3/4}$, we find that $|S_{t+1}|$ stochastically dominates $Bin(|S_t|\log n/100,p)$. Thus first considering $S_1$, we have
\[
\Pr\brac{|S_1|\leq \frac{\log n}{200}}\leq \Pr\brac{Bin\brac{{\frac{\log n}{100},1-p}}\geq \frac{\log n}{200}}\leq \bfrac{1200e\log\log n}{\log^{1/2}n}^{\log n/200}=o(n^{-2}),
\]
and in general we have
\[
\Pr\brac{\exists\ 2\leq i\leq i_0:|S_i|\leq \frac{|S_{i-1}|\log n}{100}}=o(n^{-2}).
\]
\proofend

The total number of vertices added to $W$ in this way throughout the whole of Phase 2 is $O(n^{1/2}\log^3n)=o(n^{3/4})$. (As we see later, we try this process once for each small $C$.)

Let $t^*\leq i_0=\rdup{1.5\log n}$ denote the value of $t$ when we stop the growth of $T_0$. At this stage $T_0$ has leaves $\Gamma_i$, for $i = 1,\ldots,\nu$, each with a path of length at least $n_0$, (unless we have already successfully made a cycle). We now execute an In-Phase. This involves the construction of trees $T_i,i=1,2,\ldots \nu$. Assume that $P(\Gamma_i)$ is a path from $u_0$ to $v_i$. We start with $\Gamma_i$ and build $T_i$ in a similar way to $T_0$ except that here all paths generated end with $v_i$. This is done as follows: if a current NPD $\Gamma$ has $P(\Gamma)\in {\cal P}(u,v_i)$ then we consider adding an edge $(w,u)\in E(K_2)$ and deleting an edge $(w,x)\in \Gamma$. Thus our trees are grown by considering edges directed into the start vertex of each $P(\Gamma)$ rather than directed out of the end vertex. Some technical changes are necessary however.

We consider the construction of our $\nu$ In-Phase trees in two stages. First of all we grow the trees only enforcing condition {\bf C} (ii) of success and thus allow the formation of small
cycles and paths. We try to grow them to depth $t_0$. The growth of the $\nu$ trees can naturally be considered to occur simultaneously. Let $L_{i,\ell }$ denote the set of start vertices of the paths associated with the nodes at depth $\ell$ of the $i$'th tree, $i=1,2\ldots ,\nu, \ell = 0,1,\ldots,t_0$. Thus $L_{i,0}=\{ u_0\}$ for all $i$. We prove inductively that $L_{i,\ell}=L_{1,\ell}$ for all $i,\ell$. In fact if $L_{i,\ell}=L_{1,\ell}$ then the acceptable $E(K_2)$ edges have the same set of initial vertices and since all of the deleted edges are $E(K_2)$-edges (enforced by {\bf C} (ii)) we have $L_{i,\ell +1}=L_{1,\ell +1}$. The fact that $L_{i,\ell}=L_{1,\ell}$ gives us some control over the set of new vertices exposed in the In-Phase.

The probability that we succeed in constructing trees $T_1,T_2,\ldots T_{\nu}$ is, by the analysis of Lemma \ref{lem9}, $1-o(n^{-2})$. Note that the number of nodes in each tree is $O(\D_1^{i_0})=O( n^{1/2+o(1)})$.

We now consider the fact that in some of the trees some of the leaves may have been constructed in violation of {\bf C} (i). We imagine that we prune the trees $T_1,T_2,\ldots T_\nu$ by disallowing any node that was constructed in violation of {\bf C} (i). Let a tree be BAD if after pruning it has less than $\nu$ leaves and GOOD otherwise. Now an individual pruned tree has been constructed in the same manner as the tree $T_0$ obtained in the
Out-Phase. Thus
$$Pr(T_1 \mbox{ is BAD})=O(n^{-2+o(1)}) \text{ and }\Pr(\exists\ \text{ a BAD tree})=o(1).$$
Thus with probability 1-$O(n^{-2+o(1)})$ we end up with $\nu$ sets of $\nu$ paths, each of length at least $n/\log^{1/2}n$ where the $i$'th set of paths all terminate in $v_i$. The sets $in(v_i)$ have not yet been exposed by the process and hence
\[
\Pr(\text{no $E(K_2)$ edge closes one of these paths})  \leq  \brac{1-\frac{\n\log n}{100m}}^{\n}=o(n^{-2}).
\]
Consequently the probability that we fail to eliminate a particular small cycle $C$ after breaking an edge is $o(n^{-2})$ and so w.h.p. we remove all small cycles. Thus we have shown that at the end of Phase 2 we have a PD $\Pi^{*}$ in which the minimum cycle length is at least $n_0$.
\subsection{Phase 3. Patching $\Pi^{*}$  to a Hamilton cycle}
We begin this phase with a cycle cover of $O(\log^{1/2}n)$ cycles, each of size at least $n/\log^{1/2}n$. We use the edges of $E(C_3)$ to {\em patch} these cycles into a Hamilton cycle.
Suppose at some stage of the patching process we have cycles $L_1,L_2,\ldots,L_k,k\geq 2$ where $L_1$ is a largest cycle and $L_2$ is a second largest cycle. We look for edges $(i,j)\in L_1,(k,l)\in L_2$ such that $(a_i,b_l),(a_k,b_j)\in E(C_3)$ and then replace $L_1,L_2$ by the cycle  $L_1+L_2+(i,l)+(k,j)-(i,j)-(k,l)$. Given that the degrees of vertices in $C_3$ are at least $\log n/100$, the probability we cannot find two such edges is at most
\[
\brac{1-\bfrac{\log^2n}{10^4m}}^{((1-o(1))n/\log^{1/2}n)^2}\leq n^{-\e},\qquad \e=10^{-9}.
\]
This is true regardless of previous patches because all the added edges have one end in $L_2$ and the vertices of the second lexicographically largest cycle $L_2$ at one stage are disjoint from lexicographically largest cycle at any other stage, the former having been absorbed into $L_1$.

Thus the probability we fail to create a Hamilton cycle is $O(n^{-\e}\log^{1/2}n)=o(1)$. This completes the proof of Theorem \ref{th1}.
\section{Conclusion}
As already mentioned, we feel that the above analysis can be extended to get $k$ edge disjoint Hamilton cycles if we condition on minimum in- and out-degree at least $k$, as long as we have $\sim \frac12n\log n$ edges. This is not to say that there might not be unforeseen technicalities and so we will not formally claim this.

It is even more challenging to adapt the analysis to the case where we have a linear number of edges and minimum degree $k\geq 2$. This is where the focus of research on this problem should now be.

\section*{Appendix A: Proof of Lemma \ref{Om1}}\label{A}
\begin{proof}
Let
$$S = \Bigl\{ \bd \in [n]^n \,\Big|\sum_{1\leq j \leq n} d_j = m \text{ and }\forall j,\, d_j \geq 1\Bigr\}.$$
Fix $\boldsymbol{\xi} \in S$.   Then, by the definition of $\bx$ and $d_\bx$,
$$\Pr(d_\bx = \boldsymbol{\xi}) =\left( \frac{m!}{\xi_1! \xi_2! \ldots \xi_n! }\right)\bigg/\left( \sum_{x\in S} \frac{m!}{x_1! x_2! \ldots x_n!} \right).$$
On the other hand, if $\bZ=(Z_1,Z_2,\ldots,Z_n)$ where the $Z_i$ are independent copies of the truncated Poisson random variable $Z$, then
\begin{align*}
\Pr \left(\bZ = \boldsymbol{\xi} \; \bigg|\; \sum_{1\leq j \leq n} Z_j =  m\right)
&=\left( \prod_{1\leq j\leq n}{z^{\xi_j}\over (e^{z} -1) \xi_j!}\right)
\bigg/\left( \sum_{x\in S} \prod_{1\leq j\leq n}{z^{x_j}\over (e^{z} -1) x_j!}\right)\\
&=\left(   { (e^z-1)^{-n}z^{m}\over \xi_1! \xi_2! \ldots \xi_n!} \right)
\bigg/\left( \sum_{x\in S} {(e^{z}-1)^{-n} z^{m} \over x_1! x_2! \ldots
x_n!} \right)\\
&=\Pr(\bd_\bx=\boldsymbol{\xi}).
\end{align*}
\end{proof}
\section*{Appendix B: Proof of Claim \ref{cl1}}\label{B}
We repeat the claim:
\begin{claim}
Suppose that $\bx$ is a random member of $\Omega_1$ and that $|L_\bx|=k,|M_\bx|=\ell$. Then after $\ell$ $P$-switches and $k+2\ell$ random $L$-switches we obtain $\by\in \Okl{0,0}$ such that (i) \by\ is almost uniform in $\Okl{0,0}$ and (ii) $|\Okl{0,0}|= (1-o(1))|\Omega_1^*|$. (By almost uniform in (i) we mean the following: if \bz\ is any member of $\Okl{0,0}$, then $\Pr(\by=\bz)=(1+o(1))|\Okl{0,0}|^{-1}$.)
\end{claim}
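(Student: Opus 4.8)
The plan is to track how the switching operations transform the distribution, treating the $P$-switches and the $L$-switches separately, and to control the sizes of the various subspaces $\Okl{k,\ell}$ via the estimates on $|L_\bx|$ and $|M_\bx|$ guaranteed by Lemma \ref{LM}. First I would establish part (ii). By Lemma \ref{Om1}, the degree sequence of a random $\bx\in\Omega_1$ is that of i.i.d.\ truncated Poissons conditioned on summing to $m$, and standard local-limit/concentration estimates (the same ones invoked in \eqref{ll1},\eqref{ll2}) give that $\D_\bx\le\D_0=\log^2 n$ w.h.p.\ and that the number of loops $|L_\bx|$ and parallel-edge indices $|M_\bx|$ are each $O(\log^4 n)$ w.h.p.\ — indeed each index $j$ is a loop with probability $\sim\sum_i d^+(i)d^-(i)/m^2 = O(\log^2 n/ (n\log n))$ pointwise, and a union-type bound gives the claim, with the ``no multiple loops / no triple edges / no loop meeting a double edge'' statements following similarly. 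Since $\Omega_1^* = \bigsqcup_{k,\ell}\Okl{k,\ell}$ over $(k,\ell)\ne(0,0)$ together with $\Okl{0,0}$, and since $\Pr(|L_\bx|=|M_\bx|=0)=1-o(1)$ while the property set $\cP$ also holds w.h.p., we get $|\Okl{0,0}| = (1-o(1))|\Omega_1|\cdot\Pr(\cE'\wedge\cE'')^{?}$ — more carefully, $|\Okl{0,0}|/|\Omega_1^*| = \Pr(\bx\in\Okl{0,0}\mid \bx\in\Omega_1^*) = 1-o(1)$, which is (ii).

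For part (i), the key structural fact is that the $L$-switch and $P$-switch maps are \emph{local} and \emph{reversible} in a counting sense. Consider a single random $L$-switch applied to $\bx$ with exactly one loop: it picks the loop (forced) and a uniformly random non-loop edge $(a,b)$, producing $\bx'$. For a fixed target $\bz\in\Okl{0,0}$, I would count the number of pairs $(\bx,\text{choice})$ that map to $\bz$: reversing the switch corresponds to choosing an ordered pair of edges $(x,a),(x,b)$ sharing a tail $x$ in $\bz$ and re-forming $(x,x),(a,b)$; the number of such configurations in $\bz$ is $\sum_x d^+_\bz(x)(d^+_\bz(x)-1) = (1+o(1))\sum_x d^+_\bz(x)^2$, which by the degree concentration is $(1+o(1))$ times a quantity depending only on $n,m$ and not on $\bz$. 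Dividing by the (essentially uniform) number of choices $m-k \sim m$ at each step, the per-step transition kernel is $(1+o(1))$-uniform on its image, uniformly over $\bz\in\Okl{0,0}$. The same local count handles $P$-switches (reversing: pick two loops $(x,x),(y,y)$ and form $(x,y),(y,x)$). Composing at most $k+2\ell = O(\log^4 n)$ such steps, the accumulated distortion is $(1+o(1))^{O(\log^4 n)}$ — and here one must be slightly careful: this is only $1+o(1)$ if each step's distortion is $1+O(\log^{-\omega(1)} n)$ or at least $1+n^{-\Omega(1)}$, which the degree estimates ($\D_0 = \mathrm{polylog}$, and relative fluctuations $n^{-1/2+o(1)}$ in $\sum_x d^+_\bz(x)^2$) do deliver. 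I would then condition on the high-probability event that no switch creates a new parallel edge or violates the minimum-degree-one constraint (Lemma \ref{far}(b,c) and the remark that switches preserve total degree, hence preserve $d^\pm\ge1$ w.h.p.), so that the process indeed lands in $\Okl{0,0}$ and the conditioning costs only another $1+o(1)$ factor.

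The main obstacle I anticipate is \emph{not} any single estimate but the bookkeeping needed to make the ``$(1+o(1))$ per step, compounded over $O(\log^4 n)$ steps'' argument rigorous: one needs the relative error at each step to be $o(1/\log^4 n)$, i.e.\ polynomially small in $n$, which forces one to use the sharp concentration of $\sum_x d_\bz^\pm(x)^2$ (relative error $n^{-1/2+o(1)}$) rather than a crude bound, and to verify that intermediate configurations $\bx^{(t)}$, though no longer uniform, still satisfy the degree and loop/multiedge bounds with overwhelming probability so that the counting identities remain valid along the whole trajectory. A secondary subtlety is that the number $k=|L_\bx|$ of loops is itself random, so the number of steps is random; I would handle this by working on the w.h.p.\ event $\{k+2\ell\le 3e^2\log^4 n\}$ and noting that the bound ``$\Pr(\by=\bz)=(1+o(1))|\Okl{0,0}|^{-1}$'' need only hold after conditioning on this event, the complement contributing $o(1)$ to everything. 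With these provisos the two claims follow, and I would relegate the purely computational loop/multi-edge tail bounds to the appendix calculations already cited as Lemma \ref{LM}.
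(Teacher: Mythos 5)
Your part (i) is essentially the paper's argument: the paper sets up the bipartite switching graphs $\G_1,\G_2$ between consecutive $\Okl{k,\ell}$'s, computes the forward degree ($\approx \ell$, resp.\ $\approx km$) and the reverse degree ($\approx(k+1)(k+2)$, resp.\ $\approx S_1(\by)=\sum_i d^+(i)(d^+(i)-1)$, which concentrates around $mz$ by Lemma \ref{LM}(h)), and composes $k+2\ell=O(\log^4n)$ steps with per-step relative error $O(\D_0(1/m+1/\E(S_1)))$, arriving at a product formula for $\Pr(\by=\bz)$ that is independent of the starting $\bx$. Your ``reverse the switch and count configurations in $\bz$'' is the same double counting, and you correctly identify that the per-step distortion must be polynomially small so that it survives $O(\log^4 n)$ compositions.

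Part (ii), however, contains a genuine error. You assert that $\Pr(|L_\bx|=|M_\bx|=0)=1-o(1)$ and (implicitly) that $\Okl{0,0}$ is therefore most of $\Omega_1$. This is false: for a uniform $\bx\in\Omega_1$ the expected number of loops is $\sum_i d^+_\bx(i)d^-_\bx(i)/m\approx nz^2/m\approx z\sim\tfrac12\log n$ and the expected number of parallel-edge indices is $\Theta(z^2)=\Theta(\log^2 n)$, so a typical $\bx$ has many loops and parallel edges; indeed the paper shows $|\Omega_1^*|\sim e^{-\th(\th+1)}|\Omega_1|$ with $\th\sim z$, i.e.\ $\Omega_1^*$ is a \emph{super-polynomially small} fraction of $\Omega_1$. (Your per-position loop probability is $\approx 1/n$, which already gives $\Theta(\log n)$ expected loops over $m$ positions, contradicting your union bound.) Relatedly, your decomposition ``$\Omega_1^*=\bigsqcup_{k,\ell}\Okl{k,\ell}$'' confuses $\Omega_1^*$ (the $L_\bx=M_\bx=\emptyset$ slice) with $\Omega_1$ itself. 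The consequence is that (ii) cannot be obtained by a soft whp argument: one must show $|\Omega_1^*\setminus\Okl{0,0}|\leq|\set{\bx:\neg\cP}|\leq(\log n)^{-\Omega(\log^2 n)}|\Omega_1|$ is small \emph{relative to} $|\Omega_1^*|\approx e^{-\Theta(\log^2 n)}|\Omega_1|$, which requires both the super-polynomial failure probabilities in Lemma \ref{LM}(a),(b),(g),(h) and a matching lower bound on $|\Omega_1^*|/|\Omega_1|$. That lower bound is exactly what the switching counts of Corollary \ref{cor1} (summed over $k,\ell$ to give $\sum\th^{k+2\ell}/(k!\ell!)\sim e^{\th(\th+1)}$) provide; so in the paper (ii) is a corollary of the same quantitative machinery used for (i), not a separate easy observation. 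Your proposal is missing this entire comparison.
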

We need to have  sharp estimates of the probability that $\sum_{1\leq j\leq n}Z_j',\sum_{1\leq j\leq n}Z_j''$ are close to their mean $m$. Let $\s$ be as in \eqref{2}. Section A of \cite{FP} proves
the following proposition:
\begin{proposition}\label{propx}
\begin{eqnarray}
\label{ll1}
\Pr\left(\sum_{j=1}^nZ_j'=m\right)&=&\frac{1}{\s\sqrt{2\p n}}(1+O(n^{-1}\s^{-2}))\\
\noalign{and, if in addition, $k=O(n^{1/2}\s)$}
\Pr\left(\sum_{j=2}^nZ_j'=m-k\right)&=&\frac{1}{\s\sqrt{2\p n}}\left(1+
O((k^2+1)n^{-1}\s^{-2})\right).\label{ll2}
\end{eqnarray}
\end{proposition}
Recall that $\D_\bx=\max_j(d_{\bx}^-(j)+d_{\bx}^+(j))$. Let
\begin{equation}\label{S1}
S_1=S_1(\bx)=\sum_{i\in[n]}d_{\bx}^+(i)(d_{\bx}^+(i)-1).
\end{equation}
We next bound the sizes of $L_\bx,M_\bx$.
\begin{lemma}\label{LM}
Suppose that \bx\ is drawn uniformly from $\Omega_1$ and that \bz\ denotes one of the sequences constructed as we transform \bx\ to \by\ as in Claim \ref{cl1}. Assuming that $ \tfrac25\log n\leq m/n\leq \log n$, then w.h.p. throughout the construction,
\begin{enumerate}[(a)]
\item $\D_\bz<\D_0=\log^2n$, with probability $1-(\log n)^{-\Omega(\log^2n)}$,
\item $|L_\bz|\leq 2e^2\log^4n$, with probability $1-(\log n)^{-\Omega(\log^3n)}$.
\item No two loops are adjacent in \bz, w.h.p.
\item No edge is repeated more than twice in \bz, w.h.p.
\item No two pairs of parallel edges share a vertex in \bz, w.h.p.
\item No loop lies on a vertex that is also on a parallel edge in \bz, w.h.p.
\item $|M_\bz|\leq e^2\log^4n$, with probability $1-(\log n)^{-\Omega(\log^4n)}$.
\item $\card{S_1-mz}\leq n^{2/3}$, with probability at least $1-(\log n)^{-\Omega(\log^3n)}$,
where  $S_1$ is given by \eqref{S1}.
\end{enumerate}
\end{lemma}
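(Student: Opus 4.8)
The plan is to establish each of (a)--(h) first for $\bz=\bx$ (uniform in $\Omega_1$) and then to transport it deterministically along the at most $O(\log^4n)$ switches of Claim~\ref{cl1}, using the following facts about the switches. An $L$-switch moves one entry among the odd positions and one entry back, so it changes no out-degree $d^+_\bz(i)$ and no in-degree $d^-_\bz(i)$, hence alters neither $\D_\bz$ nor $S_1(\bz)$; being rejected whenever it would create a parallel edge or a new loop, it also does not increase $|L_\bz|$ or $|M_\bz|$. A $P$-switch acting on a genuine double edge $(x,y)$ with $x\ne y$ (the only case once $\bx$ has no double loop) deletes those two edges and inserts the loops $(x,x),(y,y)$, so it changes $\D_\bz$ by $0$, $S_1(\bz)$ by at most $2\D_0$, decreases $|M_\bz|$, and increases $|L_\bz|$ by $2$. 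Consequently, all along the construction $\D_\bz=\D_\bx$, $|M_\bz|\le|M_\bx|$, $|L_\bz|\le|L_\bx|+2|M_\bx|$ and $|S_1(\bz)-S_1(\bx)|\le2\D_0|M_\bx|=O(\log^6n)$, so (a), (b), (g), (h) reduce to the corresponding statements for $\bx$. Every such statement will be proved in the i.i.d.\ truncated-Poisson model and carried to $\Omega_1$ by Lemma~\ref{Om1}: by \eqref{ll1} applied to $\cE'$ and $\cE''$, $\Pr(\cE'\cap\cE'')=\Theta((\s^2 n)^{-1})=n^{-1+o(1)}$, so an unconditioned failure probability $p$ becomes at most $p\cdot n^{1+o(1)}$, which is negligible against every bound below.

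For (a), $d^+_\bx(i)+d^-_\bx(i)$ is a sum of two conditioned truncated Poissons and, since $z\sim m/n\le(1+o(1))\log n$, $\Pr(Z\ge k)=\sum_{j\ge k}z^j/(j!(e^z-1))\le(2ez/k)^k$, which at $k=\tfrac12\D_0=\tfrac12\log^2n$ is $(\log n)^{-\Omega(\log^2n)}$; a union bound over the $n$ vertices and the transfer give (a). For (b), the loops of $\bx$ are at most the slot-pairs $(2j-1,2j)$ with equal values, each having conditional probability $\le\D_0/m$ given the odd slots, with these indicators negatively associated, so $\Pr(|L_\bx|\ge t)\le\binom mt(\D_0/m)^t\le(e\D_0/t)^t$, which is $(\log n)^{-\Omega(\log^4n)}$ for $t=\Theta(\log^4n)$. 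For (g), I enumerate collections of disjoint parallel slot-pairs, a fixed pair being parallel (given $d^{\pm}$) with probability $S_1S_1''/(m(m-1))^2=O(z^2/m^2)$ once $S_1,S_1''\le2mz$ ($S_1''$ the in-degree analogue of $S_1$), giving $\Pr(|M_\bx|\ge t)\le(O(z^2)/t)^{\Omega(t)}=(\log n)^{-\Omega(\log^4n)}$ for $t=\Theta(\log^4n)$. For (h), I apply McDiarmid's inequality to $\wh S_1=\sum_i\wh Z_i'(\wh Z_i'-1)$ with $\wh Z_i'=\min\{Z_i',(\log n)^3\}$: one coordinate change moves $\wh S_1$ by $\le(\log n)^6$, so $\Pr(|\wh S_1-\E\wh S_1|>\tfrac14n^{2/3})\le\exp(-n^{1/3-o(1)})$, while $\E\wh S_1=mz-(\log n)^{-\Omega(\log^3n)}$ and $\Pr(\wh S_1\ne S_1)\le n\Pr(Z>(\log n)^3)=(\log n)^{-\Omega(\log^3n)}$; adding the $O(\log^6n)$ drift from the $P$-switches and transporting to $\Omega_1$ gives (h); the cruder bound $S_1,S_1''\le2mz$ feeding into (g) is obtained the same way, with truncation $(\log n)^4$.

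Parts (c)--(f) are ``no small forbidden substructure'' statements, each handled by a first-moment computation in $D_\bx$ plus the preservation just described. For a double loop, $\E(\#)\le\binom m2\,n\,\E[(Z)_2]^2/m^4=O(nz^4/m^2)=o(1)$; for a triple edge, for two parallel pairs through a common vertex, and for a loop meeting a parallel edge the same scheme (choose the $O(1)$ vertices and $O(1)$ edge-slots, bound by a product of $(d)_r/(m)_r$ factors, use $\E[(Z)_r]=O(z^r)$ on the event $\{\D_\bx<\D_0\}$) gives bounds of order $\mathrm{polylog}(n)/n=o(1)$. For preservation, once (c)--(f) hold for $\bx$ the $P$-phase inserts loops only at endpoints of double edges, which carry no loop (the $\bx$-version of (f)) and lie in pairwise vertex-disjoint pairs (the $\bx$-version of (e)), so it never creates a double loop or a triple edge, and the $L$-phase creates no parallel edge or loop at all; hence (c)--(f) survive to every intermediate $\bz$.

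\textbf{Main obstacle.} The substance is in the bookkeeping across the switch process rather than in any single estimate: one must nail the exact action of $P$- and $L$-switches on out-degrees, loops and parallel edges, check that the intermediate sequences remain in $\Omega_1$ with the stated probability, and — for (b), (g), (h) — choose the truncation level large enough (here $\D_0=\log^2n$ for degrees and loops, but $(\log n)^3$, resp.\ $(\log n)^4$, for the $S_1$ estimates) that both the $n\,\Pr(Z>\text{truncation})$ error and the $\Pr(\cE'\cap\cE'')^{-1}=n^{1+o(1)}$ transfer loss are swallowed by the advertised $(\log n)^{-\Omega(\cdot)}$ bounds. Parts (c)--(f) need the most case-work, since one has to list every configuration a single switch can produce, but each case is a one-line estimate.
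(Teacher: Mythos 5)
Your proposal is correct and follows essentially the same route as the paper's proof: each property is first established for the uniform $\bx\in\Omega_1$ by working in the i.i.d.\ truncated-Poisson model of Lemma \ref{Om1} and removing the conditioning on $\cE',\cE''$ via \eqref{ll1}--\eqref{ll2} (moment/union bounds for (b)--(g), a truncated bounded-differences inequality for (h)), and then transported through the switch process using $\D_\bz=\D_\bx$, $|L_\bz|\le|L_\bx|+2|M_\bx|$ and the $O(\mathrm{polylog}\,n)$ drift of $S_1$. The only differences are cosmetic, e.g.\ your per-pair collision probability $S_1S_1''/(m(m-1))^2$ in (g) versus the paper's cruder $(\D_0/m)^{2k}$ bound inside a binomial-moment estimate.
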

\begin{proof}
We first observe that the function $f(z)=ze^z/(e^z-1)$ is monotone increasing.
\beq{z}{
\frac{z}{\r}=1-e^{-z}\text{ and so }\r\geq \tfrac25 \log n\text{ implies that }z \sim\r\geq \tfrac25\log n.
}
(a) Using our assumptions on $m$ and \eqref{z} we see that $\s^2\sim z\sim\r$. Then,
We have
\mult{Delta}{
\Pr(d_\bx^+(1)\geq \D_0/2)\leq \frac{\Pr(Z_1'\geq \D_0/2)}{\Pr\brac{\sum_{j=1}^nZ_j'=m}}\leq
\frac{\sum_{j\geq \D_0/2}\frac{z^j}{j!(e^{z}-1)}}{(1-o(1))/(\s \sqrt{2\pi n})}\leq\\ n^{1/2}\bfrac{2e}{\D_0}^{\D_0/2}=(\log n)^{-\Omega(\log^2n)}.
}
This upper bound holds for \bz\ too, as the total degree of a vertex does not change.

(b) Suppose that $k=O(\log^{O(1)}n)$. Then, assuming
\begin{align*}
\E\brac{\binom{|L_\bx|}{k}}&\leq \binom{m}{k}\frac{\Pr(x_{2j-1}=x_{2j},j=1,2,\ldots,k)}{\Pr\brac{\sum_{j=1}^nZ_j'=m}\Pr\brac{\sum_{j=1}^nZ_j''=m}}\\
&\leq \bfrac{me}{k}^k\s^2n \bfrac{\D_0}{m}^k=\s^2n\bfrac{e\log^2n}{k}^k.
\end{align*}
Then for $t>0$ we have
\[
\Pr(|L_\bx|\geq t)=\Pr\brac{\binom{|L_\bx|}{k}\geq \binom{t}{k}}\leq \frac{\E\brac{\binom{|L_\bx|}{k}}}{\binom{t}{k}}\leq \s^2 n\bfrac{e\log^2n}{t}^k.
\]
Putting $t=k=\log^3n$ yields $|L_\bx|\leq \log^3n$ with the required probability. Now $|L_\bz|\leq |L_\bx|+2|M_\bx|$ and so (b) will follow, once we verify (g) below.

(d) For this and (c) and (e) we need only prove the result for \bx. This is because under the claimed circumstance, switches will preserve these properties. We rule out an edge appearing at least three times. Let $\cE_1$ be the event $\set{x_1=x_3=x_5=1}$ and let $\cE_2$ be the event $\set{x_2=x_4=x_6=2}$ and note that $\Pr(\cE_i)\leq (\D_0/m)^3$ for $i=1,2$.
\begin{align*}
\Pr(\neg (d))&\leq \binom{m}{3}n^2\cdot\Pr\brac{\cE_1\bigg|\sum_{j=1}^nZ_j'=m} \Pr\brac{\cE_2\bigg|\sum_{j=1}^nZ_j'=m}\\
&=\binom{m}{3}n^2\cdot\frac{\Pr\brac{\sum_{j=1}^nZ_j'=m\mid \cE_1}\Pr(\cE_1)}{\Pr\brac{\sum_{j=1}^nZ_j'=m}}\cdot\frac{\Pr\brac{\sum_{j=1}^nZ_j''=m\mid \cE_2}\Pr(\cE_2)}{\Pr\brac{\sum_{j=1}^nZ_j''=m}}\\
&\leq \frac{m^3n^2\D_0^6}{m^6}\cdot\frac{\Pr\brac{\sum_{j=1}^nZ_j'=m\mid \cE_1}} {\Pr\brac{\sum_{j=1}^nZ_j'=m}} \cdot\frac{\Pr\brac{\sum_{j=1}^nZ_j''=m\mid \cE_2}}{\Pr\brac{\sum_{j=1}^nZ_j''=m}}.
\end{align*}
Now,
\begin{align*}
\Pr\brac{\sum_{j=1}^nZ_j'=m\mid \cE_1}&=\sum_{k=3}^{\D_0} \Pr\brac{\sum_{j=2}^nZ_j'=m-k\mid \cE_1,Z_1'=k}+(\log n)^{-\Omega(\log^2n)}\\
&=\sum_{k=3}^{\D_0} \Pr\brac{\sum_{j=2}^nZ_j'=m-k}+(\log n)^{-\Omega(\log^2n)}\\
&\sim \Pr\brac{\sum_{j=1}^nZ_j'=m},\quad\text{using \eqref{ll1} and \eqref{ll2}}.
\end{align*}
Thus
\[
\Pr(\neg (d))\leq \frac{2m^3n^2\D_0^6}{n^6}=o(1).
\]
(e) In a similar vein,
\[
\Pr(\neg (e))\leq(1+o(1))\binom{m}{4}n^3\bfrac{\D_0}{m}^{8}=o(1).
\]

(c)
In a similar vein,
\[
\Pr(\neg (c))\leq(1+o(1))\binom{m}{2}n\bfrac{\D_0}{m}^{4}=o(1).
\]
(f)
In a similar vein,
\[
\Pr(\neg (f))\leq(1+o(1))\binom{m}{3}n^2\bfrac{\D_0}{n}^6=o(1).
\]
(g) Given (d), we can assume that $M_\bx$ consists of pairs of repeated edges. Then, as in (b),
\begin{align*}
\E\brac{\binom{|M_\bx|}{k}}&\leq \binom{m}{2k}\frac{\Pr(x_{2j-1}=x_1,x_{2j}=x_2,j=2,3,\ldots,k)}{\Pr\brac{\sum_{j=1}^nZ_j'=m}\Pr\brac{\sum_{j=1}^nZ_j''=m}}\\
&\leq n^{2k} \binom{m}{2k}\frac{\Pr(x_{2j-1}=j-1,x_{2j}=j,j=1,2,\ldots,k)}{\Pr\brac{\sum_{j=1}^nZ_j'=m}\Pr\brac{\sum_{j=1}^nZ_j''=m}}\\
&\leq 4\bfrac{me}{2k}^{2k}n\log^2 n\bfrac{\D_0}{m}^{2k}=4n^{1/2}\log n\bfrac{e\log^2n}{2k}^{2k}.
\end{align*}
And so for $t>0$,
\[
\Pr(|M_\bx|\geq t)=\Pr\brac{\binom{|M_\bx|}{k}\geq \binom{t}{k}}\leq \frac{\E\brac{\binom{|M_\bx|}{k}}}{\binom{t}{k}}\leq 4n\log^2 n\bfrac{e\log^2n}{4t^{1/2}}^{2k}.
\]
Putting $t=k=\log^4n$ yields (f).

(h) Let $X_j=Z_j'(Z_j'-1)-\E(Z_j'(Z_j'-1))=Z_j'(Z_j'-1)-mz/n$ for $j\in[n]$. Let $A=\log^6n$. 
\[
\Pr(X_j\geq A)\leq \Pr(Z_j'\geq \log^3n)=\sum_{j\geq \log^3n}\frac{z^j}{j!(e^{z}-1)}\leq \bfrac{e}{\log^2n}^{\log^3n}.
\]
Putting $X_j'=\min\set{X_j,A}$ we see that
\begin{align*}
\Pr(|S_1-mz|\geq n^{2/3})&= \Pr\brac{\card{\sum_{j=1}^nZ_j'(Z_j'-1)-mz}\geq n^{2/3}\ \bigg|\sum_{j=1}^nZ_j'=m}\\
&\leb (n^{1/2}\log n)\Pr\brac{\card{\sum_{j=1}^nZ_j'(Z_j'-1)-mz}\geq n^{2/3}}\\
&=(n^{1/2}\log n)\Pr\brac{\card{\sum_{j=1}^nX_j}\geq n^{2/3}}\\
&\leq (n^{1/2}\log n)\Pr\brac{\card{\sum_{j=1}^nX_j'}\geq n^{2/3}}+\Pr(\exists j:X_j\geq A).
\end{align*}
Now $-\log^2n\leq X_j'\leq A$ and so by Hoeffding's theorem
\[
\Pr(|S_1(\bx)-mz|\geq n^{2/3}/2)\leq (n^{1/2}\log n)\exp\set{-\frac{n^{4/3}}{2n(A^2+\log^2n)}}+n\bfrac{e}{\log^2n}^{\log^3n}.
\]
Now $|S_1(\bz)-S_1(\bx)|=O(\log^{10}n)$ and this completes the proof of (h).
\end{proof}
Recall the definition of $\Okl{k,\ell}=\set{\bx\in \Omega_1:|L_\bx|=k,|M_\bx|=\ell\text{ and }\cP}$ where we define $\cP=\cP_1\cup\cP_2$ where $\cP_1$ is the property that the high probability events described in Lemma \ref{LM}(a),(b),(g),(h) hold and $\cP_2$ is similar, with reference to (c),(d),(e),(f). We note that $\Pr(\neg\cP_1=O((\log n)^{-\log^2n})$.

We define two bipartite graphs $\G_1, \G_2$. Firstly we define $\G_1=\G_1(k,\ell)$ with bipartition $\Okl{k,\ell},\Okl{k+2,\ell-1}$. Join $\bx\in \Okl{k,\ell}$ to $\by\in \Okl{k+2,\ell-1}$ by an edge in $\G_1$ if   \by\ can be  obtained from \bx\ by a $P$-switch. Thus there are $i,j\in M_\bx$ such that (i) $y_{2i}=x_{2i-1}$ and $y_{2j-1}=x_{2i}$ and (ii) $y_t=x_t$ for $t\notin \set{2i,2j-1}$.
\begin{lemma}\label{switch1}
Let $d_1$ denote degree in $\G_1$.
\begin{enumerate}[(a)]
\item $\bx\in  \Okl{k,\ell}$ implies that $d_1(\bx)=\ell$.
\item $\by\in \Okl{k+2,\ell-1}$ implies that $d_1(\by)=(k+1)(k+2)$.
\end{enumerate}
\end{lemma}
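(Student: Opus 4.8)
The statement is a pair of degree counts in the bipartite switch graph $\G_1(k,\ell)$, and both follow by carefully counting the objects that a $P$-switch can produce or come from. The plan is to work directly from the definition of a $P$-switch: it takes $\bx$ with $|L_\bx|=k$, $|M_\bx|=\ell$, picks a pair $i,j \in M_\bx$ of indices carrying the \emph{same} edge $(x,y)$ (i.e. $x_{2i-1}=x_{2j-1}=x$, $x_{2i}=x_{2j}=y$), and makes the replacements $x'_{2i}\gets x_{2j-1}$, $x'_{2j-1}\gets x_{2i}$, turning the two copies of $(x,y)$ into the loops $(x,x)$ and $(y,y)$; this lands in $\Okl{k+2,\ell-1}$.

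For part (a): given $\bx \in \Okl{k,\ell}$, I would argue that each $\G_1$-neighbour $\by$ of $\bx$ is obtained by choosing which of the $\ell$ parallel edges (multiplicity-two classes; by Lemma \ref{LM}(d) every element of $M_\bx$ is a plain double edge, and by Lemma \ref{LM}(f) the two endpoints $x,y$ are distinct and not otherwise loop-incident) to ``split'' into a pair of loops. The key point to check is that distinct choices give distinct sequences $\by$ — a new loop $(x,x)$ and $(y,y)$ is created and the old double edge $(x,y)$ is destroyed, so the multiset of loop-endpoints of $\by$ records which class was chosen — and that the operation always yields a valid member of $\Okl{k+2,\ell-1}$ (here one uses property $\cP$ to rule out creating triple edges or adjacent loops). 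Hence $d_1(\bx)$ equals the number of parallel-edge classes, which is exactly $\ell$.

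For part (b): given $\by \in \Okl{k+2,\ell-1}$, I would reverse-engineer the $P$-switch. A $\G_1$-neighbour $\bx$ of $\by$ is obtained by picking an \emph{ordered} pair of distinct loops of $\by$, say the loop $(u,u)$ at position $i$ and the loop $(v,v)$ at position $j$, and merging them into the double edge $(u,v)$; this is the inverse of the $P$-switch above with $x=u$, $y=v$ (the asymmetry in the replacement formulas $x'_{2i}\gets x_{2j-1}$, $x'_{2j-1}\gets x_{2i}$ means $i$ plays the role of ``the index keeping its first coordinate'' and $j$ the role of ``the index keeping its second coordinate,'' so the pair is genuinely ordered). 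Since $\by$ has $k+2$ loops, there are $(k+2)(k+1)$ such ordered pairs, and each yields a distinct $\bx \in \Okl{k,\ell}$ (distinct because the resulting double-edge class, and the positions $i,j$ now carrying that edge, are recoverable from $\bx$). One must also check every such merge produces a legal $\bx$ with property $\cP$, again invoking Lemma \ref{LM} to ensure no forbidden configuration (e.g.\ a triple edge or a loop adjacent to a double edge) is created; since we only remove loops and add one double edge, this is immediate. Thus $d_1(\by)=(k+1)(k+2)$.

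The only genuinely delicate point — and the one I would spend the most care on — is verifying that the two maps (split a parallel class; merge an ordered pair of loops) are honest bijections onto the relevant neighbourhoods, i.e.\ that no two distinct choices collide and that every neighbour arises this way, rather than from some other pair of switch-positions. This is where the structural facts of Lemma \ref{LM} (no triple edges, no two parallel pairs sharing a vertex, no loop adjacent to a parallel edge) are essential: they guarantee that the ``type'' of the local modification is unambiguous, so the inverse operation is well-defined on $\Okl{k,\ell}$ and $\Okl{k+2,\ell-1}$. Everything else is bookkeeping.
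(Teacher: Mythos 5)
Your proposal is correct and follows essentially the same route as the paper's (much terser) proof: part (a) counts the $\ell$ double-edge classes that can each be split into a pair of loops, and part (b) counts the $(k+2)(k+1)$ ordered pairs of loops that can be merged into a (directed, hence ordered) double edge, with the structural properties from Lemma \ref{LM} guaranteeing that these operations stay inside the relevant sets and that distinct choices give distinct neighbours. The extra care you take in checking injectivity and in reading $\ell$ as the number of parallel-edge \emph{classes} (which is what the transition $\Okl{k,\ell}\to\Okl{k+2,\ell-1}$ requires) is exactly the right interpretation, so there is nothing to correct.
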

\begin{proof}
(a) Each  pair of parallel edges in \bx\ yields two loops that are not adjacent because Lemma \ref{LM} (d) holds.\\
(b) Each pair of loops in \by\  yields two pairs of parallel edges, but does not create a triple edge because Lemma \ref{LM} (c) holds.
\end{proof}

We define $\G_2=\G_2(k)$ with bipartition $\Okl{k,0},\Okl{k-1,0}$. Join $\bx\in \Okl{k,0}$ to $\by\in \Okl{k-1,0}$ by an edge in $\G_2$ if \by\ can be obtained from \bx\ by an $L$-switch. Thus there are  $i\in L_\bx$ and $j\notin L_\bx$ such that (i)  $y_{2i}=x_{2j-1},y_{2j-1}=x_{2i}$ and (ii) $y_t=x_t$ for $t\notin \set{2i,2j-1}$.

\begin{lemma}\label{switch2} Let $d_2$ denote degree in $\G_2$.
\begin{enumerate}[(a)]
\item $\bx\in  \Okl{k,0}$ implies that $k(m-k+1-\D_0)\leq d_2(\bx)\leq k(m-k+1)$.
\item $\by\in \Okl{k-1,0}$ implies that $S_1(\by)-2\D_0\leq d_2(\by)\leq S_1(\by)$.
\end{enumerate}
\end{lemma}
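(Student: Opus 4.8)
The plan is to compute $d_2(\bx)$ and $d_2(\by)$ by a direct count of the $L$-switches (respectively, reverse $L$-switches) that are available. The key preliminary remark is that an $L$-switch alters exactly two entries of the sequence, and that from the pair $(\bx,\by)$ one can read off which entry of $\bx$ held the deleted loop and which was its partner; hence distinct switches applied to a fixed $\bx$ yield distinct outputs, and each $\by$ is produced from a given $\bx$ in at most one way. Thus $d_2(\bx)$ equals the number of loop--partner pairs in $\bx$ that lead to a member of $\Okl{k-1,0}$, and $d_2(\by)$ equals the number of reverse pairs in $\by$ that come from a member of $\Okl{k,0}$.

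For (a), fix $\bx\in\Okl{k,0}$. A switch chooses one of the $k$ loops — at a vertex $v$, which by property $\cP$ (Lemma \ref{LM}(c)) carries no other loop — and a non-loop partner position $j$. As there are $m-k$ non-loop positions, $d_2(\bx)\le k(m-k)\le k(m-k+1)$. For the lower bound, with the loop at $v$ fixed, the partner positions that need to be excluded are those whose edge is incident to $v$ — otherwise a loop at $v$ survives or a fresh loop is created — and these number $(d^+_\bx(v)-1)+(d^-_\bx(v)-1)=d^+_\bx(v)+d^-_\bx(v)-2\le\D_\bx-2<\D_0-2$, together with those whose switch produces a new parallel edge. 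By the remark after the definition of the $L$-switch (and Lemma \ref{far}) the parallel-creating moves are few and may be treated as a lower-order correction; discarding only the first class leaves more than $m-k+1-\D_0$ valid partners per loop, whence $d_2(\bx)\ge k(m-k+1-\D_0)$. That the output lies in $\Okl{k-1,0}$ follows because total degrees are unchanged (so $\D_\by=\D_\bx<\D_0$), because a switch moves $S_1$ by only $O(\log^{10}n)$ so that $|S_1(\by)-mz|\le n^{2/3}$ is inherited from Lemma \ref{LM}(h), and because the move creates none of the local configurations forbidden by $\cP$.

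For (b), fix $\by\in\Okl{k-1,0}$. A reverse $L$-switch is specified by a vertex $v$ and an ordered pair of distinct out-edge positions $p,q$ of $v$ (distinct positions force distinct heads, as $\by$ has no parallel edge): the predecessor $\bx$ then carries a loop at $v$ in place of one of these two edges and a single recombined edge in place of the other. The total number of such ordered pairs over all $v$ is $\sum_v d^+_\by(v)\big(d^+_\by(v)-1\big)=S_1(\by)$, giving $d_2(\by)\le S_1(\by)$. For the lower bound one discards the inadmissible pairs: one of the chosen out-edges being itself a loop at $v$ (the loop count would not rise), $v$ already carrying a loop of $\by$ (the predecessor would have two loops at $v$, violating $\cP$), or the recombined edge already occurring in $\by$ (the predecessor would contain a parallel pair). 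Using $\D_\by<\D_0$, the bound $|L_\by|\le 2e^2\log^4n$ on the number of loops (Lemma \ref{LM}(b)), and the sparsity of parallel-creating moves, the discarded pairs are of lower order, so $d_2(\by)\ge S_1(\by)-2\D_0$; membership of $\bx$ in $\Okl{k,0}$ is checked as in (a).

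The main obstacle is exactly the parallel-edge bookkeeping. The loop-creating exclusions are bounded termwise by the degree bound $\D_0$, but a single switch at a loop vertex $v$ can a priori spoil as many as $\sum_{\gamma\in N^+_\bx(v)}d^+_\bx(\gamma)$ partners, which is not $O(\D_0)$; likewise in (b) a parallel pair can be created in many ways. The way through is the one already indicated in the text: these moves are rare (Lemma \ref{far}), so ruling them out alters $d_2$ only by a lower-order amount, and since the near-uniformity argument of Claim \ref{cl1} only uses the leading-order values $d_2(\bx)\sim k(m-k)$ and $d_2(\by)\sim S_1(\by)\sim mz$ — both essentially independent of the chosen sequence — a correction of size $o(m)$ in (a) and $o(S_1)$ in (b) is harmless.
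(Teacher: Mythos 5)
Your proof is correct and follows essentially the same route as the paper's: trivial upper bounds by counting all loop/partner (resp.\ vertex and ordered out-edge pair) choices, and lower bounds obtained by subtracting the choices that retain or create a loop or that create a parallel edge. Your more careful accounting of the parallel-edge exclusions (which per loop are bounded only by $O(\D_0^2)$ rather than $\D_0$) in fact yields a slightly weaker constant in the lower bounds than the lemma states, but as you observe this is immaterial for Corollary \ref{cor1}, and the paper's own one-line justification glosses over exactly the same point.
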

\begin{proof}
(a) The upper bound is clear, we choose a loop and another non-loop and replace them by two edges incident with vertex containing the loop. For the lower bound, we subtract those $j$ that would lead to a parallel edge containing vertex $x_{2i-1}$.\\
(b) The degree of $\by\in \Okl{k-1,0}$ in $\G_2(k)$ is between $S_1(\by)-2\D_0$ and $S_1(\by)$. The upper bound is clear, we choose a vertex $v$ of \by\ and two out-neighbours $w_1,w_2$ and replace $(v,w_1),(v,w_2)$ by $(v,v),(w_1,w_2)$. For the lower bound, we subtract those choices that would create a parallel edge.
\end{proof}

\begin{corollary}\label{cor1}
Assume that $m\sim \tfrac12n\log n$ and that $k,\ell\leq 20e^2\log^2n$, then
\[
|\Okl{k,\ell}|=\brac{1+O\bfrac{\log^4n}{n}}\frac{\th^{k+2\ell}}{k!\ell!}|\Okl{0,0}|,
\]
where $\th=\E(S_1(\bx))/m$, and by Lemma \ref{LM} (h),  $\th\sim z$. (Here the expectation of $S_1(\bx)$ is over a random \bx\ from $\Omega_1$. Although it might seem necessary to define it over a random \bx\ in $\Okl{k,\ell}$, $S_1$ changes by a small enough amount as we make changes and so it suffices to stick with the current definition.)
\end{corollary}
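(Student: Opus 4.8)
The proof is a two-stage switching argument built on the switch graphs $\G_1(k,\ell)$ and $\G_2(j)$ and the degree estimates of Lemmas \ref{switch1} and \ref{switch2}. Put $N:=k+2\ell$; since $k,\ell\le 20e^2\log^2n$ we have $N=O(\log^2n)$, far below the bound $2e^2\log^4n$ of Lemma \ref{LM}(b), so no intermediate set $\Okl{k',\ell'}$ with $k'+2\ell'\le N$ produced by the switches fails property $\cP$ for lack of room for loops, and all the switch graphs below relate exactly the sets we need.

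First I would remove the multiple edges. The graph $\G_1(k',\ell')$ is bipartite between $\Okl{k',\ell'}$ and $\Okl{k'+2,\ell'-1}$, and by Lemma \ref{switch1} it is biregular with $d_1\equiv\ell'$ on the first side and $d_1\equiv(k'+1)(k'+2)$ on the second; counting edges both ways gives the \emph{exact} relation $\ell'\,|\Okl{k',\ell'}|=(k'+1)(k'+2)\,|\Okl{k'+2,\ell'-1}|$. Telescoping over $\ell'=\ell,\ell-1,\dots,1$ (with $k'=k,k+2,\dots,k+2\ell-2$) yields, with no error term,
\[
|\Okl{k,\ell}|=\frac{(k+1)(k+2)\cdots(k+2\ell)}{\ell!}\,|\Okl{N,0}|=\frac{N!}{k!\,\ell!}\,|\Okl{N,0}|,
\]
so it suffices to prove $|\Okl{N,0}|=\bigl(1+O(\log^4n/n)\bigr)\,\frac{\theta^N}{N!}\,|\Okl{0,0}|$.

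Next I would peel off the loops using $\G_2(j)$, bipartite between $\Okl{j,0}$ and $\Okl{j-1,0}$, for $j=N,N-1,\dots,1$. By Lemma \ref{switch2}(a) every $\bx\in\Okl{j,0}$ has $d_2(\bx)=j(m-j+1)+O(j\D_0)=jm(1+O(\log n/n))$, and by Lemma \ref{switch2}(b) every $\by\in\Okl{j-1,0}$ has $d_2(\by)=S_1(\by)+O(\D_0)$. Writing $\sigma_i:=|\Okl{i,0}|^{-1}\sum_{\bz\in\Okl{i,0}}S_1(\bz)$ for the mean of $S_1$ over $\Okl{i,0}$, and noting that every $\bz\in\Okl{i,0}$ satisfies $|S_1(\bz)-mz|\le n^{2/3}$ (part of $\cP$, by Lemma \ref{LM}(h)) so that $\sigma_i\asymp mz\asymp n\log^2n$ and $\D_0/\sigma_i=O(1/n)$, counting the edges of $\G_2(j)$ both ways gives
\[
|\Okl{j,0}|=\frac{\sigma_{j-1}}{jm}\,\bigl(1+O(\log n/n)\bigr)\,|\Okl{j-1,0}|.
\]
Telescoping over $j=1,\dots,N$ and folding the $N=O(\log^2n)$ per-step errors into a single factor,
\[
|\Okl{N,0}|=\frac{1}{N!\,m^N}\Bigl(\prod_{i=0}^{N-1}\sigma_i\Bigr)\bigl(1+O(\log^3n/n)\bigr)\,|\Okl{0,0}|.
\]

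Finally, and this is the crux, I would show that each $\sigma_i$ equals $m\theta$ up to a factor $1+O(\log n/n)$; then $\prod_{i<N}\sigma_i=m^N\theta^N(1+O(\log^3n/n))$ and combining the three displays proves the corollary, the stated $O(\log^4n/n)$ absorbing all the accumulated error. A crude bound is immediate from $\cP$: every $\bz\in\Okl{i,0}$ has $S_1(\bz)=mz+O(n^{2/3})$ by Lemma \ref{LM}(h), and a random $\bx\in\Omega_1$ has $S_1(\bx)=mz+O(n^{2/3})$ outside an event of probability $(\log n)^{-\Omega(\log^3n)}$ on which trivially $S_1\le m^2$, so $m\theta=\E_{\Omega_1}[S_1]=mz+O(n^{2/3})$; hence $\sigma_i=m\theta\bigl(1+O(n^{2/3}/(mz))\bigr)$, which already yields the corollary with $o(1)$ in place of $O(\log^4n/n)$. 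To reach the stated error one also uses the \emph{stability} of $S_1$: a single $P$- or $L$-switch changes $S_1$ by only $O(\D_0)$ (clear from the definitions of the switches together with $\D_\bz<\D_0$), and $\Okl{i,0}$ is reached from $\Okl{0,0}$ by at most $N$ switches, so $S_1$ differs by $O(N\D_0)=O(\log^4n)$ between them, a relative $O(\log^2n/n)$; together with $\theta\sim z$ this pins $\sigma_i$ to $m\theta$. I expect the genuine obstacle to be precisely this step — passing from pointwise control of $S_1$ along switch-paths to control of the \emph{average} $\sigma_i=\E_{\Okl{i,0}}[S_1]$, since $\G_2$ is not regular on the $\Okl{i,0}$ side and weighted and unweighted averages of $S_1$ need not coincide. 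The cleanest remedy is to estimate $\E_{\Okl{i,0}}[S_1]$ directly from the truncated-Poisson model of Lemma \ref{Om1}, using that $S_1$ depends only on $d^+_\bx$ while $\E[\,|L_\bx|\mid d^+_\bx]=\rho$ is constant and $\E[\,|M_\bx|\mid d^+_\bx]=O(\operatorname{polylog} n)$ on the $\cP$-event, so that conditioning on $\{|L_\bx|=i,\ |M_\bx|=0\}$ and on the (high-probability) $\cP$-events perturbs $\E[S_1]$ by only $O(\operatorname{polylog} n)$, which is exactly what the telescoping in the second stage requires.
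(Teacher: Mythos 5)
Your proof is correct and follows essentially the same route as the paper's: the exact telescoping over $\G_1$ via Lemma \ref{switch1} is the paper's \eqref{O1}, and the near-regular telescoping over $\G_2$ via Lemma \ref{switch2} is \eqref{O2}. The point you identify as the crux --- replacing the average $\sigma_i$ of $S_1$ over $\Okl{i,0}$ by $\E_{\Omega_1}(S_1)$ --- is precisely what the paper's parenthetical remark addresses, and your resolution via Lemma \ref{LM}(h) (every element of $\Okl{i,0}$ satisfies $|S_1-mz|\leq n^{2/3}$, and the exceptional event contributes negligibly to $\E_{\Omega_1}(S_1)$) is sound; the only caveat is that this honestly yields a relative error of order $n^{2/3}/(mz)=O(n^{-1/3})$ per application rather than the stated $O(\log^4 n/n)$, a harmless overstatement present in the paper as well, since only $1+o(1)$ is ever used downstream.
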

\begin{proof}
Applying Lemma \ref{switch1} repeatedly, we see that for $k,\ell\leq 20e^2\log^2n$, we have
\beq{O1}{
|\Okl{k,\ell}|=\frac{(k+2\ell)!}{k!\ell!}|\Okl{k+2\ell,0}|.
}
Then applying Lemma \ref{switch2} repeatedly, we see that
\beq{O2}{
|\Okl{k+2\ell,0}|=\brac{1+O\brac{(k+2\ell)\D_0\brac{\frac{1}{\E(S_1)}+\frac{1}{m}}}}
\frac{(\E(S_1))^{k+2\ell}}{(k+2\ell)!m^{k+2\ell}}|\Okl{0,0}|.
}
Thus,
\[
|\Okl{k,\ell}|=\brac{1+O\bfrac{\log^4n}{n}}\frac{(\E(S_1))^{k+2\ell}}{k!\ell!m^{k+2\ell}}|\Okl{0,0}|.
\]
\end{proof}
We can now complete the proof of Claim \ref{cl1} as follows: we can write,
\[
 \Omega_1=\bigcup_{k,\ell\leq 20e^2\log^2n}\Okl{k,\ell}\cup X,
\]
where
\[
X=\set{\bx\in \Omega_1:(\max\set{k,\ell}>20e^2\log^2n)\vee ((k,\ell) \le 20e^2\log^2n\text{ and }\neg\cP)}.
\]
So,
\[
\sum_{k,\ell\leq 20e^2\log^2n}|\Okl{k,\ell}|\leq |\Omega_1|
\leq \sum_{k,\ell\leq 20e^2\log^2n}|\Okl{k,\ell}|+|\set{\bx\in \Omega_1:(\max\set{k,\ell}>20e^2\log^2n)\vee \neg\cP}|.
\]
Therefore, from Lemma \ref{LM} and Corollary \ref{cor1},
\mults{
\sum_{k,\ell\leq 20e^2\log^2n}\frac{\th^{k+2\ell}}{k!\ell!}|\Okl{0,0}|\leq\brac{1+O\bfrac{\log^4n}{n}} |\Omega_1|\leq\\
\brac{1+O\bfrac{\log^4n}{n}} \sum_{k,\ell\leq 20e^2\log^2n}\frac{\th^{k+2\ell}}{k!\ell!}|\Okl{0,0}|+|\Omega_1|(\log n)^{-\Omega(\log^2n)}.
}
Now $\th=O(\log n)$ and so $\sum_{k,\ell\leq 20e^2\log^2n}\frac{\th^{k+2\ell}}{k!\ell!}=e^{\th(\th+1)}(1-O((\log n)^{-\Omega(\log^2n)}))$.

Now
\[
\Okl{0,0}\subseteq\Omega_1^*=\set{\bx\in \Omega_1:L_\bx=M_\bx=\emptyset}\subseteq \Okl{0,0}\cup \set{\bx\in \Omega_1:\neg\cP_1}.
\]
Therefore,
\[
|\Okl{0,0}|\leq |\Omega_1^*|\leq |\Okl{0,0}|+|\Omega_1|(\log n)^{-\log^2n}.
\]
Because $|\Okl{0,0}|\sim e^{-\th(\th+1)}|\Omega_1|$ and $\th=O(\log n)$, this implies that
\beq{f}{
|\Omega_1^*|\sim |\Okl{0,0}|\sim e^{-\th(\th+1)}|\Omega_1|
}
This completes the proof of Claim \ref{cl1}(ii). The above analysis also proves Claim \ref{cl1}(i). It shows that choosing random switches leads to a near uniform member of $\Okl{0,0}$, which is almost all of $\Omega_1^*$. Suppose we start with \bx\ chosen uniformly from $\Omega_1$ and that $\bx\in \Okl{k,\ell}$ and that we construct a sequence $\by_1,\by_2,\ldots,\by_{k+2\ell}$ where $\by_{k+2\ell}\in \Okl{0,0}$. Then for any $\by\in \Okl{0,0}$, we have
\begin{align*}
\Pr(\by_{k+2\ell}=\by)&=\brac{1+O\bfrac{\log^4n}{m}}\frac{1}{|\Omega_1|}\prod_{i=1}^\ell\frac{(k+2i-1)(k+2i)}{i}\prod_{j=1}^{k+2\ell}\frac{\E(S_1)}{jm}\\
&=\brac{1+O\bfrac{\log^4n}{m}}\frac{1}{|\Omega_1|}\frac{\E(S_1)^{k+2\ell}}{k!\ell!m^{k+2\ell}}.
\end{align*}
The RHS is independent of the choice of \bx.  This completes the proof of Claim \ref{cl1}.

\section*{Appendix C: Proof of Theorem \ref{th3}}
We observe that for an $y>0$,
\beq{trick}{
|\Omega_1|=(m![x^{m}](e^x-1)^n)^2=\brac{\frac{m!(e^y-1)^{n}}{y^{m}}[x^{m}]\bfrac{e^{xy}-1}{e^y-1}^n}^2.
}
Notice that $p_y(x)=\frac{e^{xy}-1}{e^y-1}$ is the probability generating function of the random variable $Z(y)$. It then follows from \eqref{trick} that
\beq{tr1}{
|\Omega_1|=\brac{\frac{m!(e^y-1)^{n}}{y^{m}}\Pr(X_1+X_2+\cdots+X_n=m)}^2
}
where $X_1,X_2,\ldots,X_n$ are independent copies of $Z(y)$. We choose $y=z$ so that the probability in \eqref{tr1} is large. To finish we use \eqref{sizeD} and \eqref{ll1} and \eqref{f}.
\proofend
\end{document}